\newtheorem{thm}{Theorem}
\newtheorem{lem}{Lemma}
\newtheorem{prop}{Proposition}
\begin{document}

\setcounter{section}{0} 
\pagenumbering{arabic}
\setcounter{page}{1}
\newtheorem{lemma}{Lemma}
\newtheorem{theorem}{Theorem}
\newtheorem{remark}{Remark}[section]
\newtheorem{corollary}{Corollary}[section]
\newtheorem{proposition}{Proposition}
\newcommand{\vect}[1]{\overline{#1}}
\newcommand{\dt}{\mathit{dt}}
\def\e{\eta}
\def\m{\mu}
\def\a{\alpha}
\def\r{\rho} \def\s{\sigma}
\def\l{\lambda}
\def\P{\mbox{P}}
\def\E{\mbox{\rm E}}
\def\Cov{\mbox{\rm Cov}}
\def\o{\omega} \date{}
\title{Customers' abandonment strategy in an $M/G/1$ queue}
\author{Eliran Sherzer and Yoav Kerner}

\newcommand{\overbar}[1]{\mkern 1.5mu\overline{\mkern-1.5mu#1\mkern-1.5mu}\mkern 1.5mu}

\maketitle

\begin{abstract}
We consider an $M/G/1$ queue in which the customers, while waiting
in line, may renege from it. We study the Nash equilibrium profile
among customers, and show that it is defined by two sequences of
thresholds. For each customer, the decision is based on the observed
past (which determines from what sequence the threshold is taken),
and the observed queue length (which determines the appropriate
element in the chosen sequence). We construct the a of equations
that has the Nash equilibrium as its solution, and discuss the
relationships between the properties of the service time
distribution and the properties of the Nash equilibrium, such as
uniqueness and finiteness.
\end{abstract}

\section{Introduction}

Understanding customers' abandonments from a queue is of interest
for service providers and customers. \replaced{There are many
applications concerning customers' abandonments since}{It has many
applications because} real-world customers are unwilling to wait for
excessive lengths of time. Applications are presented more broadly
by Mandelbaum and Shimkin~\cite{Mandelbaum00}.  Traditional queueing
theory has dealt with the analysis of queues under the assumption of
\added{a} given patience distribution\deleted{s}, and many studies
have addressed \replaced{models which include abandonments}{this
problem}, starting with Barrer~\cite{Barrer57}, who studied the
queue length distribution in the $M/M/s+D^w$ case (where $D$
indicate\replaced{s}{d} deterministic patience). Sufficient
conditions for the existence of the steady-state virtual waiting
time distribution in the $G/G/1+G^w$ were later obtained by
Baccelli~\cite{Baccelli84,Baccelli81}. Boxma et al.~\cite{boxma10}
showed how to determine the busy-period distribution for various
choices of the patience time distribution. Brandt and
Brandt~\cite{Brant13} also studied the distribution of the busy
period. In particular, they gave an explicit representation of the
Laplace-Stieltjes transform of the workload and the busy period, in
the case of phase-type distributed impatience.
Yet, these studies assumed that the patience distribution was given. Here, we investigate how these distributions are constructed by rational behavior of customers, which are affected by factors such as individual costs and preferences. Essentially, the patience of each customer is based on an individual optimization, that is, the perceived balance between the costs of waiting and the benefits of service. Since the behaviour of others has an influence on the individual (abandonments of others shorten the individual's required waiting time), the mutual interactions lead us to look at the system in the standard form of a game, and to seek the Nash equilibrium. Therefore, the patience distribution is no longer given, but instead it results from a cost/reward model, and from the strategic behaviour implied by it.\\
\indent The seminal study that viewed queues as economic systems,
and studied the strategic behaviors within them, came from
Naor~\cite{Naor69}. Naor considered an observable $M/M/1$ queue, in
which there is a constant reward $R$ from service, and a constant
waiting time rate $C$. He showed that under self-optimization,
customers will join the queue if the number of customers present
upon arrival is less than a threshold, $n$
$n=\lfloor{\frac{R\mu}{C}}\rfloor $.
 Moreover, once a customer joins the queue, he stays until served. Hassin and Haviv~\cite{Hassin95} also studied an $M/M/1$ queue,
  but in their case the customers have no information about their current position in the queue (i.e., unobservable queue).
   The \deleted{reward and} waiting cost \replaced{is}{are} the same as in Naor's study. However, the reward is $R>0$ if service is completed in less than $T$ time units, otherwise it equals $0$.
    They showed that the pure equilibrium strategy is as follows: join the queue with some probability $p$ and otherwise balk, and if one joined then he waits $T$ time units. The reason for this result is that the virtual waiting time follows an increasing failure rate (IFR) pattern. That is, the remaining waiting time stochastically decreases along with the time passed. This is true for every $M/M/1$ queue with impatient customers, as Baccelli and Hebuterne previously showed \added{in}~\cite{Baccelli81}.\\
    \indent Mandelbaum and Shimkin~\cite{Mandelbaum00} retained the assumptions of unobservability, linear waiting costs and constant service reward, but instead considered \added{an} $M/M/m$ queue\deleted{s} and heterogeneous customers whose waiting costs and rewards \replaced{may}{could} vary between \added{the} different customer types.
    Their results \replaced{indicate}{showed} that,
    depending on the reward to cost ratio,
     a customer's best response is to either abandon the queue upon arrival unless of course one of the servers is available,
      in which case he enters service immediately or, given the IFR property, to never abandon and wait until receiving service.
       Moreover, they proposed a case in which the solution is richer, in which customers are discharged without knowing it\added{,}
       (see\added{,} e.g.~\cite{Palm53}). In their\deleted{new} proposed model, each customer will never be served with probability $1-q$.
       Thus, the longer a customer has already waited, the higher the posterior probability that he has been discharged.
        It turns out that the system has an eventually decreasing (and in fact unimodal) hazard rate function, which makes finite abandonments rational.
         Thus, Mandelbaum and Shimkin showed that the best response is to either abandon at arrival, unless,
         of course, one of the servers is available or to abandon after a finite time $T$, which is determined by the ratio of the waiting cost and the
         reward. A follow up study by Mandelbaum and Shimkin~\cite{Mandelbaum04} considered a nonlinear waiting cost in an unobservable $M/M/m$ queue.
          They provided conditions for the existence and uniqueness of the equilibrium, and suggested procedures for its computation.\\
          \indent Another important
           study, by Haviv and Ritov~\cite{Haviv01}, explored an $M/M/1$ queue where the waiting cost is nonlinear and the reward from service is no
            longer constant. They showed that the equilibrium \added{strategy} is to abandon after waiting
             \replaced{$T$ time units in the queue}{a time $T$} ($T =0$ or $T=\infty $ are possible).
              Also, they showed that having a mixed Nash equilibrium may occur when the ratio of the cost and reward functions satisfie\replaced{s}{d}
               certain conditions. In this model, the waiting time \replaced{has an}{follows} IFR, and thus the remaining waiting time reduces with the
                elapsed waiting time. However, the waiting cost is convex, and therefore waiting becomes more expensive.
                 The latter balances the improvement in the waiting time and the remaining waiting
                 costs. \\
                 \indent Afeche and Sarhangian~\cite{Afeche15} also study
                 customers abandonments in an observable $M/M/1$ queue, where they consider pricing
                 as a means control the behavior of some of the
                 customers.
    Cripps and Thomas~\cite{Cripps14}, and Debo at al~\cite{Debo12}, considered a
    discrete-time model of an observable single server queue with homogenous customers who maximize discounted payoffs.
     In those studies, he server is bad (i.e. not
functioning) with a given probability and good otherwise. Both in
~\cite{Cripps14} and~\cite{Debo12}, study customers strategic
behavior and given what they observe in the queue the need to first
decide wether join or balk and then decide if and when to renege. A
very interesting result in the both studies that if one reneges,
then all the customers in worse positions in the queue renege as
well. Later we show that this phenomenon is also in our study. More
related work can found in~\cite{HH} and~\cite{Ha}.\\
 \indent So far, we
have introduced a variety of models dealing with customers'
abandonments from a queue. They vary in many ways, such as their
observability level, their cost and reward functions, and number of
servers. However, all of these studies had something in common, they
all assumed an exponential service
distribution\footnote{Except~\cite{Cripps14} and~\cite{Debo12}.
However, they focus on discrete-time model.}. In this study, we
\deleted{would like to}relax this assumption. In particular, we
consider an observable $M/G/1$ queue with a First-Come-First-Served
(FCFS) discipline. \replaced{With the term}{By} observable we mean
that everyone can see their own position in the queue at any time,
and they are able to observe service completions and abandonments
made by others. Unlike the case where the service distribution is
exponential, the \deleted{service} age in \deleted{the general} case
\added{of generally distributed service time} is meaningful.
\replaced{The age}{This} determines our anticipation of the
remaining time for the current service, and consequently the waiting
time. Hence, we allow customers to keep track of time.\\
\indent  When considering expected utility, we specify that
customers have a linear waiting cost and a constant reward from
service. Customers don't have a waiting cost while they are being
served. We assume that all customers are rational, in the sense that
\replaced{every customer}{each one} wishes to maximize his own
expected utility, and so they will stay as long as their expected
utility is positive. Another consequence that resulted from relaxing
the memoryless service distribution is the fact that the waiting
time doesn't necessarily have to be with IFR.
 As shown above, this played a major role in studies~\cite{Hassin95}-\cite{Naor69}.
  If the service distribution is with IFR, (which implies that the waiting time distribution is also with IFR),
  then the solution in trivial, in which, if one joined then he will stay forever. Thus, we focus on a case where the solution is richer and therefore, we limit ourselves to service distributions with
decreasing failure rate (DFR). That is, the remaining waiting time
stochastically increases \deleted{along} with \deleted{the} time
\deleted{passed}. In fact, in our model, we can use a weaker
property. Specifically, we require only that \added{the} service
time mean residual life (MRL) is increasing with \replaced{its
age}{its past}. Consequently, customers' expected queueing time
increases with the service age.\\
\indent An example for service distribution with DFR is the
following: Think of a queue for purchasing train tickets from a
machine. Suppose there are two kinds of customers, those who are
local and hence more experienced and on the other hand unexperienced
customers. An experience customer shouldn't delay more than a few
seconds for buying a ticket. Thus, if after a few seconds since the
service began a completion hasn't occurred, then it is highly
possible that an unexperienced customers is the one who buys the
ticket and hence the remaining service time is larger. We also note
that, if the service time MRL isn't bounded then rational customers
will surely abandon at some point, since their expected utility will
eventually be negative. An example of this kind \added{of
distribution} is the Pareto distribution. However, if the service
time MRL is bounded, it is possible that customers will be willing
to wait forever, given a big enough service reward. An
example of a distribution of this kind is the hyperexponential.\\
\indent The rest of the paper is arranged as follows. In
Section~\ref{section:model} we present the model, and customers'
expected utility and strategy profile. Next, in
Section~\ref{section:result} we give the theoretical framework with
instructions on how to obtain the Nash equilibrium. This is followed
by a numerical example. Finally, in Section~\ref{section:discuusion}
we discuss and summarize our primary results.

\section{Model formulation} \label{section:model}

\subsection{Model description}

We consider a single server queueing model in which the arrivals are
according to a Poisson process with rate $\l$ and service times
\replaced{are iid}{comes from a} generally distributed. We denote
the service time\deleted{s} by $X$, and $\mathbb{E}[X]$ by
$\bar{x}$. We also \added{use the notation}  \deleted{denote}
$f(\cdot)$ \replaced{for the}{to be a} pdf and $F(\cdot)$
\replaced{for the}{to be a} cdf, with $\bar{F}(\cdot)=1-F(\cdot)$,
and the hazard function is denoted by $h(\cdot)$.  The
\replaced{service}{queue} discipline is FCFS. Each customer can see
his position in the queue at any moment, and \added{he} is able to
observe service completions and abandonments made by others.
However, customers are not aware of events that occurred prior to
their arrival. Of course, they do not anticipate future service
times. Customers can keep track of time, and are allowed to abandon
at any moment. All customers are homonomous in their reward from
service, which is denoted by $V$, and a linear waiting cost, which
is denoted by $C$. In order to complete the model description we
first present the following definition. For a non-negative random
variable $X$ with \replaced{cdf}{distribution function} $F(\cdot)$,
\deleted{define} the MRL function \added{is defined as follows}:\\
$$m_X(x)=\mathbb{E}[X\!-\!x|X\!>\!x]=\int_{0}^{\infty}\frac{\overbar{F}(x+t)}{\overbar{F}(x)}dt,\quad
x\geq 0$$ Also, let $ m_X(\infty)=\underset{x\rightarrow
\infty}{\lim} m_X(x)$. We distinguish between service distributions
in which the MRL is bounded, and those in which it is unbounded.
\added{We assume that the} service time \replaced{has an}{follows}
increasing mean residual life (IMRL)~\cite{Balow}.

\subsection{Utility function}

For each individual the expected utility function balances
\added{the} reward from service with \added{the} expected waiting
cost. Customers always take into account their future costs, while
the time they already waited is considered a sunk cost. Each
customer wishes to maximize his own expected utility; as a result,
his best response is to stay as long as the expected utility is
positive. We first consider a case where abandonments are not
allowed. This will assist un contract the utility function with
abandonments. Let $G_n(t)$ be the expected utility function value
from staying until being served, for an individual that has $n$
customers in front of him in the system when the current service age
is $t$. Clearly,
 \begin{align*}
    G_n(t)=  \begin{cases}
              V \hspace{8cm}                &  n=0\\
              V-C(\mathbb{E}[X-t|X>t]+(n-1)\bar{x})\quad  &  n\geq1  \end{cases}
 \end{align*}
We next present a sequence of differential equations that can be
used to solve  $G_n(t)$ for $n\geq 1$.
\begin{prop}  \label{prop:G(t)}
For any value of $n\geq 0$, $G_n(t)$ solves the sequence of
differential equations for $n\geq 1$,
\begin{align}\label{eq:prop1}
G'_n(t)=C-h(t)(G_{n-1}(0)-G_n(t)) .
\end{align}
\end{prop}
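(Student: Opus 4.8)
The plan is to verify directly that the closed form already displayed for $G_n(t)$ satisfies the stated differential equation, the only analytic input being the standard derivative identity for the mean residual life function. First I would rewrite the case $n\geq 1$ using the MRL notation introduced above, namely $\mathbb{E}[X-t\mid X>t]=m_X(t)$, so that $G_n(t)=V-C\bigl(m_X(t)+(n-1)\bar{x}\bigr)$. Differentiating in $t$ then reduces the whole claim to understanding $m_X'(t)$, since the term $(n-1)\bar{x}$ is constant in $t$ and contributes nothing.

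The key step is the MRL identity $m_X'(t)=h(t)\,m_X(t)-1$. I would obtain it by writing $m_X(t)=\frac{1}{\overbar{F}(t)}\int_t^\infty \overbar{F}(u)\,du$, differentiating via the quotient rule, and substituting $\overbar{F}'(t)=-f(t)$ together with $h(t)=f(t)/\overbar{F}(t)$; the two resulting terms collapse to $h(t)m_X(t)-1$. Feeding this back gives $G_n'(t)=-C\,m_X'(t)=C-C\,h(t)\,m_X(t)$, which is the left-hand side expressed in closed form.

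It remains to identify the right-hand side. Here I would evaluate $G_{n-1}(0)$, using $m_X(0)=\mathbb{E}[X]=\bar{x}$, and check that the single formula $G_{n-1}(0)=V-C(n-1)\bar{x}$ is valid for every $n\geq1$ — in particular that the boundary value $G_0(0)=V$ is recovered when $n=1$. Subtracting then yields the clean cancellation $G_{n-1}(0)-G_n(t)=C\,m_X(t)$, so that $C-h(t)\bigl(G_{n-1}(0)-G_n(t)\bigr)=C-C\,h(t)\,m_X(t)$, matching $G_n'(t)$ exactly.

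The main obstacle is modest: it is the correct derivation of the MRL identity and, more subtly, the bookkeeping at the boundary $n=1$, where $G_0(0)=V$ must be shown to fit the general expression rather than being treated as a separate case. As an independent confirmation I would also note the probabilistic first-step reading of the equation: over an interval $[t,t+dt]$ the waiting cost $C\,dt$ accrues, with probability $h(t)\,dt$ the current service completes and the continuation value jumps to $G_{n-1}(0)$, and otherwise the age advances to $t+dt$; letting $dt\to0$ reproduces $G_n'(t)=C-h(t)\bigl(G_{n-1}(0)-G_n(t)\bigr)$ directly.
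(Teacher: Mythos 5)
Your proposal is correct and follows essentially the same route as the paper's proof: both verify the equation by direct differentiation of the closed form via the quotient rule applied to $m_X(t)=\int_t^\infty \overline{F}(u)\,du/\overline{F}(t)$, together with the cancellation $G_{n-1}(0)-G_n(t)=C\,m_X(t)$. Your explicit check that $G_0(0)=V$ fits the formula $G_{n-1}(0)=V-C(n-1)\bar{x}$ at $n=1$, and your first-step heuristic (which the paper relegates to the remark following the proposition), are welcome additions but do not change the argument.
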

\begin{proof}
First, one can see that $G_{n-1}(0)-G_n(t)= C\mathbb{E}[X-t|X>t]$
and hence the RHS of Equation~(\ref{eq:prop1}) equals
$C-Ch(t)\mathbb{E}[X-t|X>t]$. We also note that
$\mathbb{E}[X-t|X>t]=\frac{\int_{s=t}^{\infty}\bar{F}(s)ds}{\overbar{F}(t)}$.
Hence, the derivative of $G_n(t)$ is \begin{align*}
\frac{dG_n(t)}{dt}=-C\frac{-\overbar{F}(t)\overbar{F}(t)+f(t)\int_{s=t}^{\infty}\bar{F}(s)ds}{\overbar{F}(t)^2}=C-Ch(t)\mathbb{E}[X-t|X>t]
\end{align*}
which \replaced{matched}{coincides with} the derivative presented in
the proposition, and hence the proof is concluded.
\end{proof}
\begin{remark}
The intuition behind the proposition is as follows: Since there is a
linear waiting cost, then clearly, $C$ should be in the derivative.
Moreover, if service completion occurs, then the expected utility
changes from $G_n(t)$ to $G_{n-1}(0)$, which happens with rate $h(t)$.
\end{remark}
\noindent Deriving the expected utility in our model is
much harder than the one proposed in Proposition~\ref{prop:G(t)}. This is because, customers have to take into account the fact that
abandonments may occur, that is, one needs to consider the
possibility that he abandons later. Nonetheless, we next show that
our expected utility function is the positive part of $G_n(t)$. The
minor difference between the two functions shows how the possibility
of abandoning later reflects in the expected utility function. We
distinguish between customers who already observed a service
completion (called type \textbf{\romannum{1 }} customers) and those
who haven't observed a service completion (type \textbf{\romannum{2
}} customers) \footnote{Upon arrival, everyone are type
\textbf{\romannum{2 }} customers, which potentially may switch to
type \textbf{\romannum{1 }} due to being present in the queue while
service completion occurred.}. Due to the fact that keeping track of
time is possible, type \textbf{\romannum{1 }} customers know the
exact service age. However, since the current service age is unknown
upon arrival, type \textbf{\romannum{2 }} customers can only
estimate it. Let $U_n(t)$ be the \added{expected} utility of a type
\textbf{\romannum{1 }} customer taking the optimal action in the
next moment, given that there are $n$ others in front of him in the
system and $t$ time units elapsed since the last service completion.
\begin{prop}  \label{prop:u(t)}
    $$U_n(t) =  (G_n(t))^+, \quad   n \geq 0$$
\end{prop}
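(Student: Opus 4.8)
The plan is to prove the identity by induction on $n$, treating the type \textbf{\romannum{1}} customer's problem as an optimal stopping problem in the age variable $t$. For the base case $n=0$ the customer is already in service, incurs no further waiting cost and collects $V$, so $U_0(t)=V=(G_0(t))^+$ because $V>0$. For the inductive step I would fix $n\ge 1$, assume $U_{n-1}(\cdot)=(G_{n-1}(\cdot))^+$, and write the dynamic-programming characterization of $U_n$: at each instant the customer compares abandoning (payoff $0$) with waiting an increment $dt$, during which he pays $C\,dt$ and, with hazard $h(t)$, a service completion moves him to state $(n-1,0)$ with continuation value $U_{n-1}(0)$ (arrivals behind him are irrelevant under FCFS). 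This yields $U_n(t)\ge 0$ together with the continuation-region equation $U_n'(t)=C-h(t)\,(U_{n-1}(0)-U_n(t))$, which is exactly the ODE of Proposition~\ref{prop:G(t)} but with $U_{n-1}(0)$ in place of $G_{n-1}(0)$.

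Next I would establish the two structural facts that let the candidate $(G_n(t))^+$ fit this characterization. First, the formula of Proposition~\ref{prop:G(t)} gives the identity $G_n(t)=G_{n-1}(0)-C\,m_X(t)$, and since $m_X(t)\ge 0$ this yields $G_n(t)>0\Rightarrow G_{n-1}(0)>0$; combined with the induction hypothesis this gives $U_{n-1}(0)=G_{n-1}(0)$ precisely on the set where the candidate is strictly positive, so there the continuation ODE for $U_n$ coincides with the ODE that $G_n$ solves. Second, the IMRL assumption makes $m_X$ nondecreasing, hence $G_n$ nonincreasing in $t$, so $\{t:G_n(t)>0\}$ is an initial interval $[0,t_n^{\ast})$ (possibly all of $[0,\infty)$ when $m_X(\infty)$ is small). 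On $[0,t_n^{\ast})$ both $U_n$ and $G_n$ solve the same first-order linear ODE and share the value-matching boundary value $0$ at $t_n^{\ast}$, so uniqueness of solutions forces $U_n\equiv G_n$ there, while on $[t_n^{\ast},\infty)$ the claim is $U_n\equiv 0$.

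The lower bound $U_n(t)\ge (G_n(t))^+$ is immediate, since staying until service yields $G_n(t)$ and abandoning at once yields $0$. The work is the matching upper bound, that is, verifying that $(G_n)^+$ is a genuine solution of the variational inequality and not merely a subsolution: one must check that in the stopping region $[t_n^{\ast},\infty)$ no continuation strategy produces positive value, i.e.\ that the option of waiting for a quick service completion, which can send the customer to the possibly positive state $(n-1,0)$, is never worth its expected cost once $G_n(t)\le 0$. I expect this to be the main obstacle, and it is exactly where the IMRL property must be used quantitatively, through the mean-residual-life identity $m_X'(t)=h(t)m_X(t)-1\ge 0$ relating the hazard rate to the mean residual life at the threshold; this is also the delicate point, since for strictly increasing $m_X$ the comparison between the abandonment threshold of $(G_n)^+$ and the hazard-based optimal stopping boundary is nontrivial and must be controlled carefully. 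Once the stopping region is verified, a standard verification argument for the optimal-stopping problem identifies the value function with the candidate and closes the induction.
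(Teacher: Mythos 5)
Your plan follows the same skeleton as the paper's proof --- induction on $n$ with the base case $n=0$, the infinitesimal dynamic-programming recursion, and the observation that in the continuation region $U_n$ satisfies the ODE of Proposition~\ref{prop:G(t)}, whose solution is $G_n$ --- but you demand one additional step that the paper never performs: verifying that $(G_n)^+$ is a genuine solution of the variational inequality, i.e.\ that no continuation strategy has positive value on the stopping set $\{t: G_n(t)\le 0\}$. The paper simply applies the positive part, solves the ODE, and states in the remark embedded in its proof that ``only myopic decisions are under consideration''; in other words, the paper's $U_n$ is by construction the value of the myopic rule (stay iff the stay-until-served utility $G_n(t)$ is positive), and global optimality over all stopping times is never checked. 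Your lower bound, your monotonicity argument for the initial-interval structure of $\{G_n>0\}$, and your observation that $G_n(t)>0$ forces $U_{n-1}(0)=G_{n-1}(0)$ are all correct and match the paper's logic where they overlap.

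The step you flag as ``the main obstacle,'' however, is not merely delicate --- as a verification for fully rational stopping it fails, and the identity you yourself cite shows why. At the myopic boundary $t_n^{\ast}$ one has $U_{n-1}(0)=G_{n-1}(0)=C\,m_X(t_n^{\ast})$, so the payoff from waiting $dt$ longer and abandoning if no completion occurs is
\begin{align*}
\bigl(h(t_n^{\ast})\,U_{n-1}(0)-C\bigr)dt \;=\; C\bigl(h(t_n^{\ast})\,m_X(t_n^{\ast})-1\bigr)dt \;=\; C\,m_X'(t_n^{\ast})\,dt \;\ge\; 0,
\end{align*}
with strict inequality whenever the MRL is strictly increasing --- e.g.\ for the hyperexponential distribution in the paper's own numerical example. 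Thus $(G_n)^+$ violates the dynamic-programming inequality just beyond $t_n^{\ast}$: a forward-looking customer strictly prefers to continue up to the hazard boundary $h(t)\,U_{n-1}(0)=C$, which under IMRL lies weakly to the right of $t_n^{\ast}$, and the true optimal-stopping value then strictly exceeds $(G_n(t))^+$ on an interval. So the ``standard verification argument'' you invoke to close the induction cannot go through except when $m_X'\equiv 0$ (the exponential case, where the two boundaries coincide). The proposition is correct only under the paper's myopic solution concept --- the definition of $U_n$ as the utility of a customer ``taking the optimal action in the next moment'' --- under which the verification step is unnecessary and your argument collapses back to the paper's short ODE computation; for fully rational stopping, the threshold condition $G_n(t)=0$ in Equation~(\ref{eq:Tn}) would have to be replaced by the hazard condition above.
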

\begin{proof}
$n\!=\!0$ implies that an individual is already in service, and
hence has no cost. Clearly, the optimal action in the next moment is
to stay. For $n\geq 1$, we first prove the proposition for $n=1$ and
then prove for $n\geq2$ by induction. For $n=1$, we have
\begin{align}
U_1(t)=(h(t)dtU_{0}(0)+(1-h(t)dt)U_1(t+dt)-Cdt+o(dt))^+, \quad t
\geq 0 \label{eq:u1t}.
\end{align}
If the RHS of~(\ref{eq:u1t}) is negative, then the best response is
to abandon, and hence the expected utility is zero. If the RHS is
positive, then we have the following differential equation:
\begin{align*}
U_1'(t)=C-h(t)(U_{0}(0)-U_1(t))
\end{align*}
where clearly, $U_{0}(0)=V$. This differential equation coincides
with Proposition~\ref{prop:G(t)}, and hence $U_1(t)=
(V-C(\mathbb{E}[X-t|X>t]+\bar{x}))^+$. We first assume that
$U_{n-1}(t)=(G_{n-1}(t))^+$. Based on our assumption, clearly,
$U_{n-1}(0)=(V-C(n-1)\bar{x})^+$. Combining with
\begin{align*}
U_n(t)=h(t)dtU_{n-1}(0)+(1-h(t)dt)U_n(t+dt)-Cdt+o(dt)
\end{align*}
therefore
\begin{align*}
U_n'(t)=C-h(t)(U_{n-1}(0)-U_n(t))
\end{align*}
and solving the differential equation \deleted{we} complete
\replaced{the}{our} proof.
\begin{remark}
One can see that the only way the possibility of abandoning later
reflects in the expected utility function is by abandoning once the
expected utility becomes negative. Consequently, only myopic
decisions are under consideration.
\end{remark}
\end{proof}
\noindent Let $\hat{U}_n(t)$ be the expected utility of a type
\textbf{\romannum{2 }} customer taking the optimal action in the
next moment, given \added{that} $n$ other in front of him in the
system and $t$ time units elapsed since his arrival. Formally, let
$N$ be the number of customers \added{in the system} and $A(t)$ be
the service age, both \replaced{upon}{at} arrival\deleted{epochs}.
Finally, let $X_n(t)$ follow the distribution of the service
residual, given that upon the arrival of a tagged customer there
were $n$ customers in the system and $t$ time units elapsed since
then. That is, $X_n(t)\,{\buildrel d \over =}\
\{X-\!A(t)|N\!=\!n\}$.
\begin{prop}
\begin{align*}
    \hat{U}_n(t) =
              (V-C(\mathbb{E}[X_n(t)-t|X_n(t)>t]+(n-1)\bar{x}))^+\quad    n \geq1
\end{align*}
\label{prop:u(t)1}
\end{prop}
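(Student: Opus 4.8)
The plan is to reduce the type \romannum{2} case to the already-established type \romannum{1} formula of Proposition~\ref{prop:u(t)}. The key observation is that a type \romannum{2} customer differs from a type \romannum{1} customer only in one respect: the service age is unknown upon arrival, so the residual service time is governed by the conditional distribution $X_n(t)$ rather than by the raw $X$. Everything downstream of this single substitution should behave identically. First I would write down the infinitesimal recursion for $\hat U_n(t)$ in the same form as equation~(\ref{eq:u1t}), namely
\begin{align*}
\hat U_n(t)=\bigl(h_n(t)\,dt\,\hat U_{n-1}(0)+(1-h_n(t)\,dt)\,\hat U_n(t+dt)-C\,dt+o(dt)\bigr)^+,
\end{align*}
where $h_n(t)$ is the hazard rate of the conditional residual $X_n(t)$. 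The logic justifying this recursion is the same as in Proposition~\ref{prop:u(t)}: over the next instant, either a service completion occurs (moving the tagged customer from $n$ to $n-1$ ahead, with the clock reset) or it does not, and a linear cost $C\,dt$ accrues; the positive part encodes the option to abandon the moment the continuation value turns negative.

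Next I would argue, exactly as in the type \romannum{1} proof, that on the region where the continuation value is positive the recursion collapses to the linear ODE $\hat U_n'(t)=C-h_n(t)\bigl(\hat U_{n-1}(0)-\hat U_n(t)\bigr)$, which is structurally identical to Proposition~\ref{prop:G(t)} with $h$ replaced by $h_n$ and $X$ replaced by $X_n(t)$. Invoking the computation already carried out in Proposition~\ref{prop:G(t)}, its solution is $\hat U_n(t)=V-C\bigl(\mathbb{E}[X_n(t)-t\mid X_n(t)>t]+(n-1)\bar x\bigr)$, and taking the positive part to account for the abandonment option gives exactly the claimed expression. The induction on $n$ proceeds as before, anchored by $\hat U_0(0)=V$ and using $\hat U_{n-1}(0)=(V-C(n-1)\bar x)^+$, where I would note that at $t=0$ the residual conditioning gives $\mathbb{E}[X_n(0)-0\mid X_n(0)>0]=\bar x$ is \emph{not} what is used; rather the $(n-1)\bar x$ term reflects the $n-1$ full services \emph{behind} the head-of-line service, each contributing a fresh independent service of mean $\bar x$.

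The main obstacle I anticipate is purely conceptual rather than computational: one must verify that the conditional residual distribution $X_n(t)$ genuinely behaves like a bona fide service time with a well-defined hazard $h_n(t)$, so that the memorylessness-style bookkeeping in the infinitesimal recursion is legitimate. Concretely, the subtlety is that $X_n(t)$ already incorporates the arrival-epoch conditioning $\{X-A(t)\mid N=n\}$, and after $t$ further units of elapsed waiting one must condition additionally on no completion having occurred; I would need to confirm that these two layers of conditioning compose correctly so that $\mathbb{E}[X_n(t)-t\mid X_n(t)>t]$ is the right residual-life quantity to insert. Once that consistency is checked, the remainder is a verbatim repetition of the type \romannum{1} argument, and the positive-part/myopic-abandonment structure transfers without change.
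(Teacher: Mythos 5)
Your proposal follows essentially the same route as the paper: write the infinitesimal recursion for $\hat U_n(t)$ with the hazard $h_n(t)$ of $X_n(t)$, observe that on the positivity region it collapses to an ODE structurally identical to Proposition~\ref{prop:G(t)}, and conclude that the solution is the type \textbf{\romannum{1 }} formula with $X$ replaced by $X_n(t)$, taking the positive part for the abandonment option. One notational slip worth noting: the continuation value upon an observed completion should be the type \textbf{\romannum{1 }} quantity $U_{n-1}(0)$ rather than $\hat U_{n-1}(0)$ (the customer switches type and knows the new age is zero), but since the value you actually substitute, $(V-C(n-1)\bar{x})^+$, is exactly $U_{n-1}(0)$, your derivation is unaffected --- and the paper's own displayed ODE contains the same typo.
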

\begin{proof}
For $n=0$ there is no cost, similar to $U_n(t)$. For $n\geq1$, we
have
\begin{align*}
\hat{U}_n(t)=U_{n-1}(0)h_n(t)dt+(1-h_n(t)dt)\hat{U}_n(t+dt)-Cdt+o(dt)
\end{align*}
where $h_n(t)$ is the corresponding hazard function of $X_n(t)$. The
differential equation is
\begin{align*}
\hat{U}_n'(t)=C-h_n(t)(\hat{U}_{n-1}(0)-\hat{U}_n(t))
\end{align*}
For both $U_n(t)$ and $\hat{U}_n(t)$ there are differential
equations with the same structure. Hence, their solutions also have
the same structure, where in this case, $X$ is replaced by $X_n(t)$.
\end{proof}
\begin{remark}
The fact that service time \replaced{has an}{follows} IMRL implies
that both $U_n(t)$ and $\hat{U}_n(t)$ are decreasing with $t$.
\end{remark}
\begin{remark}
the two expected utility are similar and they differ only by the
customers type. This is due to the fact that in both cases the
expected waiting time possesses the IMRL property.
\end{remark}

\subsection{Strategy profile}
As mentioned, customers' best response differs depending on the
number of customers in front of them and which of the two different
customer types they belong to. Therefore, each pair of queue length
and customer type needs to be considered separately. Also, customers
may balk, and clearly when the queue is long enough customers will
not join.  This happens when the expected utility is negative from
the moment one arrives. Hence, under the assumption of rationality
of customers, there is a maximum \replaced{number of customers
in}{length of} the system, and it is denoted by $n_{\max}$. We show
how to obtain it in section~\ref{section:findingnmax}.\\
\indent The customers' strategy profile is as follows. All customers
join if the system length is less than $n_{\max}$. Of course,
joining customers will adapt their strategy according to the
expected utility function, which is defined by the customer's type
and the number of customers in front of them in the queue. Both
$U_n(t)$ and $\hat{U}_n(t)$ are monotonically decreasing with $t$,
and therefore the best response is unique and hence pure. Let $T_n$
be the time a type \textbf{\romannum{1 }} customer who has $n$
customers in front of him in the system is willing to wait from the
moment of service completion until the next service completion
occurs, or abandonment is made by the customer in front of him. Let
$S_n$ be the time that a type \textbf{\romannum{2 }} customer who
has $n$ customers in front of him in the system is willing to wait
from his arrival point until a service completion occurs, or
abandonment is made by the customer in front of him. For any type of
customer if, while waiting, service completion occurs, he updates
his expected utility function and consequently his best response.
However if, while waiting, the customer in front of him abandons,
his best response is to abandon as well. This is because the
customer in front, who abandoned first, gathered more information
and his abandonment puts the customer behind in the exact same
position in the queue, to which the best response was to abandon.\\
\indent In conclusion, we have two \deleted{sets of} sequences that
define customers' strategies. The first one is
$\{T_1,T_2,...,T_{n_{\max}-2.}\}$  and the second is
$\{S_1,S_2,...,S_{n_{\max}-1}\}$. The largest index value of the
second sequence is $A_{n_{\max}-1}$, because it is the largest value
observed upon arrival for which one would be willing to join. That
is, if a customer joined and observed $n_{\max}$ customers in the
system, he would no longer join. In the first sequence, the largest
index is obtained when a customer in the $n_{\max}$th position in
the queue observes service completion. In this scenario, he has
${n_{\max}-2}$ others in front of him after the departure of the
customer who just completed his service. We next show how to obtain
the values of $T_n$ and $S_n$.
\section{Results} \label{section:result}
\subsection{Customers' best responses}
Using Propositions~\ref{prop:u(t)} and~\ref{prop:u(t)1} we show how
to obtain customers' best responses, for both type
\textbf{\romannum{1 }} and type \textbf{\romannum{2 }} customers,
given $N=n$. Specifically, we show how to obtain the sequences
$\{T_1,T_2,...,T_{n_{\max}-2.}\}$ and
$\{S_1,S_2,...,S_{n_{\max}-1}\}$. We begin with type
\textbf{\romannum{1 }} customers. Before presenting and proving the
following lemmas, we note that if the MRL is bounded it is possible
that finite thresholds are not possible. That is, even after waiting
a long time, one still benefit from staying. Thus, there is
condition on $\underset{t\rightarrow\infty}{\lim} m_X(t)$ in order
to compute the threshold values.
\begin{lem}\label{lem:Tn}
If $\underset{t\rightarrow \infty}{\lim}
m_X(t)>\frac{V}{C}-(n-1)\bar{x}$, and based on the expected utility
function from Proposition~\ref{prop:u(t)}, $T_n$ is the value of $t$
that solves
\begin{align}
G_n(t)=V-C(m_X(t)+(n-1)\bar{x})=0 \label{eq:Tn} \quad 1\leq
n\leq n_{\max}\!-\!2
\end{align}
Otherwise, $T_n=\infty$.
 \end{lem}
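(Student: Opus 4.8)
The plan is to read $T_n$ directly off Proposition~\ref{prop:u(t)} together with the monotonicity of the utility. First I would record that, writing $m_X(t)=\mathbb{E}[X-t\mid X>t]$, Proposition~\ref{prop:u(t)} gives $U_n(t)=(G_n(t))^+$ with
\begin{align*}
G_n(t)=V-C\bigl(m_X(t)+(n-1)\bar{x}\bigr),\qquad n\geq 1 .
\end{align*}
By the remark following Proposition~\ref{prop:u(t)}, the optimal action is myopic: a type~\textbf{\romannum{1}} customer stays exactly as long as $U_n(t)>0$ and abandons at the first instant the utility vanishes. Hence $T_n$ is characterized as the first time $t$ at which $G_n(t)=0$, whenever such a time exists.

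Next I would establish existence, uniqueness, and the stated dichotomy from the shape of $G_n$. Because the service time has the IMRL property, $m_X$ is nondecreasing in $t$, so $G_n$ is nonincreasing; for an absolutely continuous $F$ the MRL is continuous, hence $G_n$ is continuous and monotone. Consequently the level set $\{t:G_n(t)=0\}$ is either empty or a single crossing point, and $G_n$ attains a zero if and only if its infimum $\lim_{t\to\infty}G_n(t)$ is nonpositive. Computing this limit gives
\begin{align*}
\lim_{t\to\infty}G_n(t)=V-C\bigl(m_X(\infty)+(n-1)\bar{x}\bigr),
\end{align*}
and the hypothesis $\lim_{t\to\infty}m_X(t)>\frac{V}{C}-(n-1)\bar{x}$ is precisely the statement that this limit is strictly negative.

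I would then conclude by separating the two cases. If the hypothesis holds, $G_n$ starts (for $n$ in the relevant range, where the customer has chosen to join) at a positive value $G_n(0)=V-Cn\bar{x}$ and decreases continuously to a strictly negative limit, so it crosses zero exactly once; that unique crossing point is the finite $T_n$ solving~(\ref{eq:Tn}). If instead $m_X(\infty)\leq\frac{V}{C}-(n-1)\bar{x}$, then $G_n(t)\geq\lim_{t\to\infty}G_n(t)\geq 0$ for all $t$ by monotonicity, so the utility never becomes negative and it is never optimal to abandon, giving $T_n=\infty$.

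The main obstacle I anticipate is not the algebra but justifying that the myopic stopping rule---abandon at the first zero of $G_n$---is genuinely optimal rather than merely a candidate. This is exactly where the monotonicity of $U_n$ in $t$ (itself a consequence of IMRL, as noted in the remark) does the work: once utility reaches zero it can only remain nonpositive, so there is no gain from waiting for a later improvement, and the first-crossing time is optimal. A secondary point worth checking is the boundary behavior at $t=0$, namely that for $1\leq n\leq n_{\max}-2$ one has $G_n(0)>0$, which holds because such a customer is present only after having found it worthwhile to join.
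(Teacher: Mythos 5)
Your proof is correct and follows essentially the same route as the paper's: IMRL makes $G_n$ continuous and nonincreasing, the hypothesis on $m_X(\infty)$ forces $\lim_{t\to\infty}G_n(t)$ to be strictly negative while $G_n(0)>0$ for the relevant range of $n$, yielding a unique finite crossing, and otherwise $G_n$ stays nonnegative so $T_n=\infty$. Your explicit limit computation and your justification of the myopic stopping rule are somewhat more careful than the paper's terse argument (which, incidentally, states the limit condition with the inequality reversed --- a typo your version implicitly corrects), but the underlying idea is identical.
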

 \begin{proof}
Since $n<n_{\max}\!-\!2$, the expected utility for $t=0$ is positive
and of course, customers will be willing to wait as long their
expected utility is positive. Recall that the service time follows
IMRL and hence, the expected utility is monotone decreasing.
Moreover, the condition $\underset{t\rightarrow \infty}{\lim}
m_X(t)<\frac{V}{C}-(n-1)\bar{x}$, means that for some value of $t$
the expected utility will be negative. Therefore, by combining the
last two arguments, \replaced{E}{e}quation~(\ref{eq:Tn}) has a
unique and finite solution. Otherwise, the expected utility will be
positive for every $t>0$ and hence the best response is to stay
forever.
\end{proof}
\noindent Of course, the solution of equation~(\ref{eq:Tn}) is
straightforward, and hence $T_n$ for any possible $n$ can be easily
obtained.
\begin{lem}\label{lem:Sn}
If $\underset{t\rightarrow \infty}{\lim}\thinspace
m_{X_n(t)}\thinspace(t)>\frac{V}{C}-(n-1)\bar{x}$, and based on the
expected utility function from Proposition~\ref{prop:u(t)1}, $S_n$
is the value of $t$ that solves
\begin{align}
V-C(m_{X_n(t)}+(n-1)\bar{x})=0,\quad  1\leq n\leq
n_{\max}\!-\!1\label{eq:Sn}
\end{align}
Otherwise, $S_n=\infty$.
\end{lem}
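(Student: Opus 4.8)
The plan is to mirror the argument for Lemma~\ref{lem:Tn}, replacing the service time $X$ by the conditional residual $X_n(t)$ throughout. By Proposition~\ref{prop:u(t)1} the relevant quantity is $\hat{U}_n(t)=(V-C(m_{X_n(t)}(t)+(n-1)\bar{x}))^+$, so I would study the sign of the inner function $g_n(t):=V-C(m_{X_n(t)}(t)+(n-1)\bar{x})$, whose unique root (when it exists) is exactly the solution of Equation~(\ref{eq:Sn}). The key structural input is that $\hat{U}_n(t)$ is monotonically decreasing in $t$; this is recorded in the Remark following Proposition~\ref{prop:u(t)1} and rests on the IMRL assumption on the service time, so that $g_n$ is continuous and strictly decreasing in $t$.

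Next I would pin down the endpoint behaviour of $g_n$. Since $1\leq n\leq n_{\max}\!-\!1$, a type~\textbf{\romannum{2 }} customer who observes $n$ ahead of him does join, which by the very definition of $n_{\max}$ means his expected utility is positive at the instant of arrival; hence $g_n(0)>0$. Because $g_n$ is strictly decreasing, $\underset{t\rightarrow\infty}{\lim} g_n(t)$ exists in $[-\infty,g_n(0))$ and equals $V-C(n-1)\bar{x}-C\underset{t\rightarrow\infty}{\lim} m_{X_n(t)}(t)$. The stated hypothesis $\underset{t\rightarrow\infty}{\lim} m_{X_n(t)}(t)>\frac{V}{C}-(n-1)\bar{x}$ is precisely the statement that this limit is negative, while the complementary condition $\underset{t\rightarrow\infty}{\lim} m_{X_n(t)}(t)\leq\frac{V}{C}-(n-1)\bar{x}$ says it is nonnegative.

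The dichotomy then follows from the intermediate value theorem. Under the stated hypothesis one has $g_n(0)>0>\underset{t\rightarrow\infty}{\lim} g_n(t)$, so by continuity and strict monotonicity $g_n$ possesses a unique finite zero, which I define to be $S_n$; this is the unique finite solution of Equation~(\ref{eq:Sn}) and is the instant at which the expected utility first drops to zero, hence the optimal abandonment time. In the complementary case $g_n(t)\geq\underset{t\rightarrow\infty}{\lim} g_n(t)\geq 0$ for all $t$, so the expected utility never turns negative and the best response is to wait indefinitely, i.e.\ $S_n=\infty$. Note that this assignment of inequalities is exactly as in the lemma: it is the \emph{strict} inequality that yields the finite threshold.

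The step I expect to require the most care is the monotonicity of $m_{X_n(t)}(t)$ in $t$ (equivalently, of $\hat{U}_n(t)$), since here both the conditioning age and the residual distribution $X_n(t)$ vary with $t$ simultaneously, which is genuinely stronger than the plain IMRL of $X$. I would rely on the derivation in Proposition~\ref{prop:u(t)1}, where $\hat{U}_n$ satisfies a differential equation of the same form as $U_n$ with $X$ replaced by $X_n(t)$, to justify that the IMRL property transfers to $X_n(t)$ and renders $g_n$ strictly decreasing; once that monotonicity is in hand, the remainder is the routine intermediate-value argument above.
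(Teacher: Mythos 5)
Your proposal is correct and follows essentially the same route as the paper: the paper's proof of this lemma is literally ``we follow the same line of argument as Lemma~\ref{lem:Tn},'' i.e.\ positivity of the utility at $t=0$ (from the definition of $n_{\max}$), monotone decrease of $\hat{U}_n(t)$ via the IMRL property (the remark after Proposition~\ref{prop:u(t)1}), and the limit condition forcing eventual negativity, giving a unique finite root or $S_n=\infty$ otherwise. Your write-up merely makes this explicit (and correctly reads the strict inequality as the finite-threshold case, where the paper's proof of Lemma~\ref{lem:Tn} contains a flipped-inequality typo), so no gap to report.
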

\begin{proof}
We follow the same line of argument as Lemma~\ref{lem:Tn}.
\end{proof}
\noindent Solving~(\ref{eq:Sn}) is not straightforward, mainly
because obtaining the distribution of $X_n(t)$ is challenging. As
mentioned, $X_n(t) \,{\buildrel d \over
=}\,\{X\!-\!A(t)|N\!\!=\!\!n\}$. That is, in order to obtain the
distribution of $X_n(t)$, one must first obtain the distribution of
$A(t)|N\!\!=\!\!n$. Let $R(t,a)$ follow the distribution of the
residual service time, given that the service age upon arrival is
$a$ and $t$ time units have elapsed since the customer's arrival.
That is, $R(t,a) \,{\buildrel d \over
=}\,X\!-\!(t\!+\!a)|X\!>\!(a\!+\!t)$. Therefore, by using the law of
total probability, the following equation is equivalent
to~(\ref{eq:Sn}):

\begin{align}
              V-C\int_a(\mathbb{E}[R(t,a)|N=n]+(n-1)\bar{x})f_{A(t)|N=n}(a)da=0,\quad   n\leq n_{\max}\!-\!1\label{eq:baru(t)r}
\end{align}
Yet $f_{A(t)|N=n}(a)$ is unknown, and will be derived in the
following sections.
\subsection{Obtaining the maximum length of the queue}\label{section:findingnmax}
\begin{prop}\label{prop:nmax}
\begin{align}
n_{\max}= \sup \left\{ n\in \mathbb{N}:n
\leq\frac{\frac{V}{C}-\int_a\mathbb{E}[R(0,a)|N=n]f_{A(0)|N=n}(a)da+\bar{x}}{\bar{x}}\right\}+1
\label{eq:nmax}
\end{align}
\end{prop}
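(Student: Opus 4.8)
The plan is to characterize $n_{\max}$ as the largest queue length at which a newly arriving customer still has positive expected utility at the moment of arrival, and then show that this characterization is exactly the supremum expression in~(\ref{eq:nmax}). First I would recall that, by definition, $n_{\max}$ is the maximum number of customers present for which an arriving customer is still willing to join. A customer who arrives and observes $n$ others in the system is a type \textbf{\romannum{2}} customer with $n$ customers in front of him, so his expected utility at his arrival epoch (i.e. at elapsed time $t=0$) is given by $\hat{U}_n(0)$ from Proposition~\ref{prop:u(t)1}. Using the law of total probability over the service age $a$ upon arrival, as in the derivation of~(\ref{eq:baru(t)r}), this arrival-epoch utility equals
\begin{align*}
\hat{U}_n(0)=\left(V-C\Big(\int_a \mathbb{E}[R(0,a)|N=n]\,f_{A(0)|N=n}(a)\,da+(n-1)\bar{x}\Big)\right)^{+}.
\end{align*}

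Next I would observe that a customer joins precisely when this utility is strictly positive, i.e. when the inner expression is nonnegative (the boundary case requiring a tie-breaking convention, which I would state explicitly). Setting the bracketed term to be nonnegative and solving the resulting inequality for $n$ gives
\begin{align*}
n\le \frac{\frac{V}{C}-\int_a \mathbb{E}[R(0,a)|N=n]\,f_{A(0)|N=n}(a)\,da+\bar{x}}{\bar{x}},
\end{align*}
which is exactly the defining inequality inside the supremum in~(\ref{eq:nmax}). Taking the supremum over all $n\in\mathbb{N}$ satisfying this inequality yields the largest position at which an arrival would join; the customer who observes this many others joins, so the maximum occupancy attained is that value plus one, accounting for the $+1$ in the formula. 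I would be careful to keep consistent whether $n$ counts customers strictly in front or total in the system, since the off-by-one bookkeeping is what produces the additive constant.

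The main obstacle is the self-referential nature of the expression: the threshold inequality for $n$ depends on the distribution $f_{A(0)|N=n}(a)$, which itself depends on $n$ and, more subtly, on the equilibrium strategy profile (the sequences $\{T_n\}$ and $\{S_n\}$) that determines how abandonments shape the stationary age-given-length distribution. Thus one cannot simply solve the inequality in closed form; the supremum is well-defined only once these conditional densities are fixed, and establishing that such a consistent profile exists is precisely the fixed-point/equilibrium question the paper addresses in later sections. For this proposition, however, I would treat $f_{A(0)|N=n}$ as given (to be derived subsequently, as the excerpt notes after~(\ref{eq:baru(t)r})) and argue only the algebraic equivalence between ``arrival utility is nonnegative'' and the stated inequality. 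The remaining step is monotonicity: I would note that the $(n-1)\bar{x}$ term grows linearly in $n$ while the reward $V$ is fixed, so once the inequality fails it continues to fail for larger $n$, guaranteeing that the supremum is attained and that $n_{\max}$ is finite (under the standing assumptions on $V$, $C$, and $\bar{x}$). This monotonicity is what makes the supremum a genuine threshold rather than an arbitrary set.
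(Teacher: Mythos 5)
Your proposal is correct and follows essentially the same route as the paper's own (much terser) proof: identify $n_{\max}$ via the condition $\hat{U}_n(0)>0$, expand it through equation~(\ref{eq:baru(t)r}) at $t=0$, extract $n$, and take the supremum, with the $+1$ accounting for the joining customer himself. Your additional remarks on the self-referential dependence on $f_{A(0)|N=n}$ and on finiteness go beyond what the paper writes and are welcome, though note your monotonicity aside is slightly informal since the integral term $\int_a\mathbb{E}[R(0,a)|N=n]f_{A(0)|N=n}(a)\,da$ itself varies with $n$; its nonnegativity already bounds the set, and the threshold (downward-closed) structure really rests on the stochastic comparison underlying Lemma~\ref{lem:Sn>Sn+1} rather than on the linear term alone.
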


\begin{proof}
We seek the largest integer value of $n$ that obeys
$\hat{U}_n(0)>0$. Thus, after extracting $n$ from
equation~(\ref{eq:baru(t)r}) and applying the supremum we get the
expression in~(\ref{eq:nmax}).
\end{proof}
 We observe that once $f_{A(0)|N=n}(a)$ is derived, $n_{\max}$ can be computed.
\subsection{Markov chain underlying the process}\label{sect:markovchain}
Our motivation for using a Markov chain is mainly to obtain the pdf
of the service age for a tagged customer who observe\replaced{s}{d}
$n$ others in the \replaced{system}{queue} upon arrival, with $t$
time units having elapsed since then. This will eventually allow us
to find a strategy that holds for the Nash equilibrium.
\subsubsection{Markov chain state space}\label{markov chain}
First, we give some general notation for the steady states of the
Markov chain:
\begin{align*}
\mathcal{B}=\left\{(k,a,w_{k+1},w_{k+2},...,w_{n-1})\right\}
\end{align*}
where
\begin{itemize}
\item $k$ is the number of waiting customers that observed service completion;
\item $a$ is the age of the current service;
\item $w_i$ is the waiting time of the $i^{\mathrm{th}}$ customer in the system; and
\item $n$ is the number of customers in the system.
\end{itemize}
A general steady-state density is denoted by
$p(k,a,w_{k+1},w_{k+2},...,w_{n-1})$, and the probability density of
having $n$ customers in the system and a service age of $a$ is
denoted by $\pi(n,a)$. It can be derived from the steady states of
the Markov chain that
\begin{align*}
\pi(n,a)=\sum_{k=0}^{n-1}\int_{\underline{w}}p(k,a,w_{k+1},...,w_{n-1})d\underline{w},
\quad n\geq 1,\thinspace a\in \mathbb{R}^+, \end{align*} We denote
the marginal probability of having $n$ customers in the system by
$\pi_n$, which is derived as $\pi_n=\int_a \pi (n,a)da$.
We first indicate some general and rather trivial relationships.
From the arrival order, we get $w_{n-1}<w_{n-2}....< w_{k+1}<a$. Due
to \added{the} abandonment strategies, we get $a<T_k$ for $1\leq
k\leq n_{\max}\!-\!2$. This is because, if $a>T_k$, then the
$k^{\mathrm{th}}$ customer will have already abandoned by now. There
are no constraints on $a$ for $k=0$. That is, if no-one observed
service completion, the first in the queue could arrive at any value
of $a$. By the same argument,
 $$\thinspace w_{i} < S_{i},  \quad \mathrm{for}\ k+1\leq i\leq n-1. $$
For further analysis we present the following definition: let the
\emph{state structure} be the combination set of $(n,k)$, where $n$
is the number of customers in the system and $k$ is the number of
waiting customers that observed service completion. Since the queue
length is limited, the Markov \replaced{process}{chain} has a
limited number of different state structures.
\begin{prop}
The total amount of Markov chain state structures is
$$|\mathcal{B}_{(k,n)}|=\frac{n_{\max}(n_{\max}+1)}{2}$$\label{structure}
\end{prop}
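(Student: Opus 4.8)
The plan is to prove the identity by directly enumerating the admissible pairs $(n,k)$ and summing. A state structure is, by definition, fully specified by the number of customers in the system $n$ together with the number $k$ of waiting customers who have already observed a service completion, so the count reduces to determining, for each feasible $n$, how many values of $k$ are possible.

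First I would fix the range of $n$. Since a nonempty state consists of one customer in service together with $n-1$ waiting customers, and since rationality caps the system length at $n_{\max}$ (as established in Section~\ref{section:findingnmax}), the admissible values are $1\le n\le n_{\max}$; the empty system is not counted among the state structures, as it carries no service age and no waiting configuration, and this omission is exactly what makes the final total lack an additive constant. Next, for a fixed $n$ I would count the feasible values of $k$: exactly one of the $n$ customers is in service and the remaining $n-1$ are waiting, and $k$ counts those among the waiting customers who witnessed the most recent service completion (the type \textbf{\romannum{1 }} customers), the rest being type \textbf{\romannum{2 }}. Hence $k$ ranges over the integers from $0$ (no waiting customer has seen a completion) up to $n-1$ (all of them have), giving exactly $n$ distinct choices. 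Summing over $n$ then yields
\begin{align*}
|\mathcal{B}_{(k,n)}| = \sum_{n=1}^{n_{\max}} n = \frac{n_{\max}(n_{\max}+1)}{2},
\end{align*}
which is the claimed expression.

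The only genuinely substantive point — and the step I expect to require the most care — is verifying that every pair $(n,k)$ with $1\le n\le n_{\max}$ and $0\le k\le n-1$ is actually realizable, so that no combination is silently excluded by the sample-path dynamics. For this I would argue that the constraints recorded earlier (the ordering $w_{n-1}<\cdots<w_{k+1}<a$, together with $a<T_k$ and $w_i<S_i$) restrict only the continuous coordinates $(a,w_{k+1},\dots,w_{n-1})$ \emph{within} a given structure and never rule out a structure outright, so each $(n,k)$ in the stated range indexes a nonempty collection of states. Distinctness across pairs is immediate, since $(n,k)$ is determined by the state itself, which guarantees there is no double counting and that the enumeration above is exact.
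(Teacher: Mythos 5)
Your final formula is right, but the enumeration behind it is wrong in two places, and the two errors happen to cancel. First, you exclude the empty system, whereas the paper counts it as a state structure: for $n=0$ there is exactly one structure, $(0)$, and it appears explicitly in the paper's list for $n_{\max}=3$, namely $\{(0),(0,a),(0,a,w_1),(1,a),(1,a,w_2),(0,a,w_1,w_2)\}$. Second, your realizability claim fails at the pair $(n,k)=(n_{\max},\,n_{\max}-1)$: the structure $(n_{\max}-1,a)$ is unreachable. To see why, note that $k=n-1$ means every waiting customer is type \textbf{\romannum{1}}, i.e., all of them were already present at the last service completion, so just after that completion the system held $n$ customers, and just before it held $n+1$. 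Since the system never exceeds $n_{\max}$, this forces $n\leq n_{\max}-1$. Equivalently, the system can only reach length $n_{\max}$ through arrivals occurring after the last completion, and any such arrival is a type \textbf{\romannum{2}} customer, so at $n=n_{\max}$ necessarily $k\leq n_{\max}-2$. This is exactly the point where your argument that the constraints ``never rule out a structure outright'' breaks down: the obstruction is not among the continuous-coordinate constraints $w_{n-1}<\cdots<w_{k+1}<a$, $a<T_k$, $w_i<S_i$ that you checked, but a sample-path reachability constraint of the dynamics.

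The paper's count is: $n$ structures for each $1\leq n\leq n_{\max}-1$, only $n_{\max}-1$ structures for $n=n_{\max}$, plus one structure for the empty system, giving
\begin{align*}
\sum_{n=1}^{n_{\max}-1} n \;+\; (n_{\max}-1) \;+\; 1 \;=\; \frac{n_{\max}(n_{\max}+1)}{2},
\end{align*}
whereas you compute $\sum_{n=1}^{n_{\max}} n$ over a set that contains one impossible structure and omits one genuine structure. The totals agree only by coincidence; as a description of the state space $\mathcal{B}$ (which is what the proposition is used for downstream, e.g., in counting the balance equations), your enumeration is incorrect and needs the $n=n_{\max}$ boundary case and the empty state handled as above.
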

\begin{proof}
We first claim that if there are $n$ customers in system, then there
are $n$ different state structures for $1\leq n\leq n_{\max}\!-\!1$.
What determines the \replaced{number}{amount} of state structures
for a given $n$, is the \replaced{number}{amount} of different
possible values of $k$, where $0\leq k\leq n\!-\!1$. That is, the
values of $k$ can be from zero to $n-1$, because even if everyone
observed service completion there would still be one in service.
However, for $n=n_{\max}$ there are $n_{\max}-1$ different state
structures. In this case, $0\leq k \leq n_{\max}\!-\!2$, while the
state $\{n_{\max}-1,a\}$ is not possible. Finally, $n=0$ means an
empty system, with just one state structure. The total number can be
computed as an arithmetic progression. It is equivalent to the sum
$\sum_{i=1}^{n_{\max}}i$, where each value of $i$ represents the
amount of state structures for a given $n$, except $i=n_{\max}$,
which in this case includes cases for both $n\!=\!0$ and
$n\!=\!n_{\max}$. From here the result is straightforward.
\end{proof}
\subsubsection{Finding the steady-state densities}
Due to the complexity of the process we begin with a simple example.
Let us consider \deleted{the} state $(0,a)$, which refers to an
active server with a current service age $a$ and an empty queue. For
$a<S_1$,
\begin{align}
p(0,a)=p(0,0)e^{-\l a}\bar{F}(a) \label{eq:p(0,a)1}
\end{align}
Equation~(\ref{eq:p(0,a)1}) justifies the following. State $(0,a)$
will always follow state $(0,0)$. This means that state $(0,0)$
occurred, and during the intervening $a$ time units there were no
service completions and no arrivals. Thus, the probability density
of $p(0,a)$ is as for $p(0,0)$, times the probability that there
were no service completions nor arrivals. However, for $a\!>\!S_1$,
we allow arrivals to occur from the beginning of service, assuming
that they will have abandoned by the time the service reaches age
$a$. For $m S_1\leq \!a\! < (m+1)S_1$, where $m\in \{0,1,2,3...\}$,
it is possible that a customer will arrive and abandon after $S_1$
time units. If, during the stay of the new arrival, more customers
arrive, then they will not be in the queue once he abandons. This is
because, if they stayed until then, they would abandon as well,
since the customer ahead of them abandoned. This process can happen
no more than $m$ times, for $a < (m+1)S_1$. For example, if $m=1$,
which means $S_1 < a < 2S_1$, then two cases are possible: no
arrivals at all and no service completion, or one arrival who
abandoned before the state reaches $(0,a)$ and no service
completion. From basic probability we obtain $$p(0,a)=p(0,0)( e^{-\l
a} +\l e^{-\l a}(a-S_1))\bar{F}(a)$$ Thus, $p(0,a)$ is equal to
$p(0,0)$ times the probability that no service completion occurs and
there were from $0$ to $m$ arrival events followed by abandonments.
We next give a general expression. Let $g(k,n,m,a)$ be
\begin{align*}
   g(k,n,m,a) = \begin{cases}
\left(\sum_{j=0}^m \frac{\l^je^{-\l(a-jS_{n})}(a-jS_{n})^j}{j!}\right)\overbar{F}(a)  & 1\leq k\!+\!1\!=\!n \leq n_{\max}\!-\!2\\
\left(\sum_{j=0}^m
\frac{\l^je^{-\l(w_{n-1}-jS_{n})}(w_{n-1}-jS_{n})^j}{j!}\right)
\frac{\overbar{F}(a)}{\overbar{F}(a-w_{n-1})}     &  1\leq k\!+\!1\!<\!n \leq n_{\max}\!-\!1   \\
\frac{\overbar{F}(a)}{\overbar{F}(a-w_{n-1})}       & \hspace{1.8cm} n=n_{\max}\\
                          \end{cases}
\end{align*}
and
\begin{align*}
    m\in  \begin{cases}
               \mathbb{N}_0       & k=0   \\
               \{0,1,2,... \left \lfloor{\frac{T_k-S_n}{S_n}}\right \rfloor\}      &1\leq k\leq n_{\max}-2
            \end{cases}
\end{align*}
where $a$ is the current service age, $w_{n-1}$ represents the
waiting time of the last joining type \textbf{\romannum{2 }}
customer who observed $n-1$ customers in the system upon arrival.
$n$ is the number of customers in the system, and $m$ relates to the
possible values of $a$: specifically, $mS_n<a<(m+1)S_n$. Lastly, $k$
is the number of customers who observed service completion.
\begin{lem}\label{lem:g(k,n,m,a)}
The function $g(k,n,m,a)$ is represented differently in three
cases.\\ Case 1, with $1\leq k\!+\!1\!=\!n \leq n_{\max}\!-\!2$,
represents the probability that the current state is $(k,a)$, given
that $a$ time units ago the state was $(k,0)$.
\\ Case 2, with $1\leq k\!+\!1\!<\!n \leq n_{\max}\!-\!1 $, represents the probability that the current state is now $(k,a,w_{k+1}...w_{n-2},w_{n-1})$, given that $w_{n-1}$ time units ago the state was $(k,a\!-\!w_{n\!-\!1},w_{k\!+\!1}\!-\!w_{n\!-\!1},...,w_{n\!-\!2}\!-\!w_{n\!-\!1},0)$.\\
Case 3 represents the probability that the current Markov state is
$(k,a,w_{k+1}...,w_{n_{\max}-2},w_{n_{\max}-1})$, given that
$w_{n_{\max}-1}$ time units ago the Markov state was:\\
$(k,a\!-\!w_{n_{\max}-1},w_{k\!+\!1}\!-\!w_{n_{\max}\!-\!1},...,w_{n_{\max}\!-\!2}\!-\!w_{n_{\max}\!-\!1},0)$.
\end{lem}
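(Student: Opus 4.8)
The plan is to verify each of the three cases by identifying $g(k,n,m,a)$ with the probability of the single transition it is claimed to represent, namely that the tracked configuration of $n$ customers persists over the relevant time window while every \emph{extra} arrival occurring during that window abandons before the window closes. Throughout I would condition on the event that no service completion occurs during the window, which is exactly what keeps $k$, and the type of each tracked customer, unchanged. The ordering constraints $w_{n-1}<\cdots<w_{k+1}<a$ together with the threshold constraints $a<T_k$ and $w_i<S_i$ then guarantee that none of the $n$ tracked customers abandons, so the only events to account for are service survival and the arrival-and-clearing of extra customers.

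First I would isolate the service-survival factor, which is common to all three cases. Since service times have survival function $\overbar{F}$, the probability that a service of age $a-w_{n-1}$ at the start of the window survives to age $a$ without completing is the conditional survival probability $\overbar{F}(a)/\overbar{F}(a-w_{n-1})$; in Case 1 the window begins at age $0$, so this reduces to $\overbar{F}(a)/\overbar{F}(0)=\overbar{F}(a)$. This accounts for the $\overbar{F}$-factor appearing in each case.

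The core of the argument is the bracketed Poisson sum, which I would establish by a clearing / time-compression argument. When the system holds $n$ customers, any additional arrival is a type \textbf{\romannum{2 }} customer who sees $n$ ahead of him and therefore abandons exactly $S_n$ time units after arriving, given that no service completion intervenes — precisely the event we have conditioned on. By the cascade property established in the strategy-profile discussion, every customer arriving behind such an extra arrival abandons at the same instant. Hence each extra arrival initiates a ``wave'' that occupies the system for a deterministic length $S_n$ and then clears completely, leaving the tracked $n$-customer state. I would then argue that, on the event that exactly $j$ waves occur and all clear within a window of length $\tau$ (with $\tau=a$ in Case 1 and $\tau=w_{n-1}$ in Case 2), the total occupied time is $jS_n$ and the remaining free time is $\tau-jS_n$; the wave-initiating arrivals form a rate-$\l$ Poisson process restricted to this free time, so the probability of exactly $j$ of them is $\l^je^{-\l(\tau-jS_n)}(\tau-jS_n)^j/j!$. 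Summing over the admissible $j=0,\dots,m$ gives the bracketed factor, and multiplying by the survival factor yields Cases 1 and 2. For Case 3, $n=n_{\max}$ forces arriving customers to balk rather than join, so no wave can occur, the sum collapses to its $j=0$ term, and only $\overbar{F}(a)/\overbar{F}(a-w_{n-1})$ survives.

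The main obstacle I anticipate is making the clearing argument rigorous: one must justify that after deleting the deterministic busy intervals of length $S_n$ the wave-initiating arrivals are exactly those of a Poisson process on the free time $\tau-jS_n$, and that requiring exactly $j$ such arrivals is equivalent to having exactly $j$ completed waves with the system free at the window's end. This is where the deterministic wave length $S_n$ and the memorylessness of Poisson arrivals are essential, and where care is needed to confirm that the bound $m$ on the number of waves (governed by $\lfloor\tau/S_n\rfloor$, and further constrained for $k\ge 1$ by $a<T_k$ through $\lfloor(T_k-S_n)/S_n\rfloor$) is precisely the range over which $\tau-jS_n\ge 0$, so that every term is a genuine probability.
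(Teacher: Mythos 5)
Your proposal is correct and follows essentially the same route as the paper's proof: the same factorization into a conditional service-survival term $\overbar{F}(a)/\overbar{F}(a-w_{n-1})$ (reducing to $\overbar{F}(a)$ in Case 1, where the window starts at age $0$) times an arrival-and-clearing term, with Case 3 degenerating to the survival factor alone because arrivals balk when they observe $n_{\max}$ customers. The only difference is one of rigor: where the paper simply asserts that the bracketed sum is the probability of $j\leq m$ arrival-followed-by-abandonment cycles, you actually derive it via the deterministic-wave/time-compression argument (waves of length $S_n$ absorbing all intermediate arrivals by the cascade property, with wave initiators forming a Poisson process on the free time $\tau-jS_n$), which is precisely the computation needed to substantiate the formula for $g(k,n,m,a)$ that the paper's proof leaves implicit.
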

\begin{proof}
Case 1: in order that the state $(k,0)$ will be replaced by the
state $(k,a)$ after $a$ time units, we need to ensure that there
will not be a service completion during those $a$ time units. Also,
we need to ensure that there will not be any new arrivals, or if
there are, then they will have abandoned by the time the Markov
chain state reaches $(k,a)$. The probability of no service
completion is $\bar{F}(a)$. The probability of not having new
arrivals once the state reaches $(k,a)$ is $$\sum_{j=0}^m
\frac{\l^je^{-\l(a-jS_{n})}(a-jS_{n})^j}{j!}$$ Of course, scenarios
which include more than one abandonment made by the
$(n+1)^{\mathrm{th}}$ customer in the system are under
consideration, where $j$ is the number of times it occurs. We also
note that $j\leq m$. \\Case 2: in order that the state
$(k,a-w_{n-1},w_{k+1}-w_{n-1},...,w_{n-2}-w_{n-1},0)$ will be
replaced by the state $(k,a,w_{k+1}...w_{n-2},w_{n-1})$ after
$w_{n-1}$ time units, we need to ensure that there will not be
service completion and no new arrivals which stayed during that time
(similar to Case 1). The probability that there will not be service
completion is $\mathbb{P}(X>a|X>a-w_{n-1})$, which is equivalent to
$\frac{\bar{F}(a)}{\bar{F}(a-w_{n-1})}$. In Case 3, new arrivals are
not a possibility anyway, and therefore, in order that the state
will be transposed from
$(k,a\!-w_{n_{\max}\!-\!1},w_{k\!+\!1}\!-w_{n_{\max}\!-\!1},...,w_{n_{\max}\!-\!2}\!-\!w_{n_{\max}\!-\!1},0)$
to $(k,a,w_{k+1}...,w_{n_{\max}-2},w_{n_{\max}-1})$, we only need to
ensure that there will be no service completion, which is
$\frac{\bar{F}(a)}{\bar{F}(a-w_{n})}$ as in Case 2.
\end{proof}
Hence, from Lemma~\ref{lem:g(k,n,m,a)} we have, for $ m S_n \leq a
\leq (m+1)S_n $,
\begin{align}
p(k,a,w_{k\!+\!1},...,w_{n\!-\!1}\!)\!=\!p(k,a\!-\!w_{\!n\!-\!1},...,w_{n\!-\!2}\!-\!w_{\!n-\!1},0)g(k,n,m,a)
\label{eq:p(k,a,w)}
\end{align}
From~(\ref{eq:p(k,a,w)}) we \deleted{can}see that it is possible to
separate the expression of the steady-state densities of the Markov
chain into two parts. The first one is also a steady-state density
of the Markov chain, for which the last argument is set to be 0. The
second is the function $g(k,n,m,a)$, which is computable given the
model parameters. Therefore, in order to obtain the steady-state
densities of the Markov \replaced{process}{chain} we need to find
those in which the last argument is set to be 0. From \added{the}
balance equations,
\begin{align}
\l \pi_0=\int_{a}p(0,a)h(a)da\label{eq:steady1}
\end{align}
\begin{align}
\l
p(k,a,w_{k+1},...,w_{n-1})=p(k,a,w_{k+1},...,w_{n-1},0)\label{eq:steady2}
\end{align}
\begin{align}
\int_a
\sum_{i=0}^{n-1}p(i,a,w_{i+1},...,w_{n+1})h(a)da=p(n,0)\label{eq:steady3}
\end{align}
Recall from Proposition~\ref{structure} that the number of state
structures is $\frac{n_{\max}(n_{\max}+1)}{2}$. Excluding the state
$0$ for each state structure, there is \replaced{single}{one} state
for which the argument is $0$. Therefore, from
equations~(\ref{eq:steady1}) to~(\ref{eq:steady3}) we have
$\frac{n_{\max}(n_{\max}+1)}{2}$ equations. Where in fact
\deleted{(w.l.o.t)}from~(\ref{eq:steady1}) \replaced{consist}{there
is} one \replaced{E}{e}quation,~(\ref{eq:steady2})
\replaced{consist}{there are} $\frac{n_{\max}(n_{\max}-1)}{2}+2$
and~(\ref{eq:steady3}) \replaced{consist}{there are}
$n_{\max}\!-\!3$ equations. Combined with~(\ref{eq:p(k,a,w)}) and
the fact that $\sum_{n=0}^{n_{\max}}\pi_n=1$, all the steady states
of the Markov \replaced{process}{chain} \replaced{is}{can be}
derived.
\begin{remark}
For numerical computation\added{s}, we guess a value for $\pi_0$.
Using equations~(\ref{eq:p(k,a,w)}) to~(\ref{eq:steady3}), we
compute $\sum_{n=0}^{n_{\max}}\pi_n$. If the total sum is smaller
than 1, we guess a larger number for $\pi_0$ and vice versa.
\label{remark:numerical}
\end{remark}
\noindent We next give a special case where $n_{\max}\!=\!3$. There
are 6 different state structures, and they are represented as
\{$(0), (0,a),(0,a,w_1),(1,a),(1,a,w_2),(0,a,w_1,w_2)$\}. From
balance equations we state that\deleted{:}
\begin{align}
\l\pi_0=\int_{a=0}^\infty p(0,a)h(a)da \label{eq:p0ap0}
\end{align}
Also,
\begin{align}
\l p(0,a)= p(0,a,0) \label{eq:p0ap0w1}
\end{align}
\begin{align}
\l p(0,a,w_1)=p(0,a,w_1,0) \label{eq:p0aw1p0aw10}
\end{align}
\begin{align}
\int_{a=0}^{\infty}\int_{w_1=0}^{A_1\wedge a} p(0,a,w_1)h(a)da=
p(0,0) \label{eq:p0w2ap10}
\end{align}
\begin{align}
\int_{a=0}^{\infty}\int_{w_1=0}^{S_1\wedge a}\int_{w_2=0}^{S_2
\wedge w_1} p(0,a,w_1,w_2)h(a)da=p(1,0) \label{eq:p0aw1w2p1a0}
\end{align}
\begin{align}
\l p(1,a)=p(1,a,0) \label{eq:p1a0}
\end{align}
Using equations~(\ref{eq:p(k,a,w)}) and~(\ref{eq:p0ap0})
to~(\ref{eq:p1a0}), and applying the numerical procedure from
Remark~\ref{remark:numerical}, all steady states can be computed.
\subsection{The age distribution given the queue length} \label{section:fa}
We next show how to obtain $f_{A(t)|N=n}(a)$, while using the
\added{steady-}state probability densities of the Markov chain. Before doing so, we would like to emphasise it's complexity. Suppose a tagged customer arrived and observed one customer in the queue and one in service. This could result in many cases, for example, the system was empty, then one arrived and enter service and then another another arrival occurred. A different example, could be just like the previous one, only now, another customer arrived prior to the tagged customer's arrival and then abandoned. Of course, there can be many cases to consider and they get more complicated as the queue gets longer. For each scenario the age distribution will be computed differently. Although, it seems very complicated, we next show how via a few simple probability operations it can be done.\\ Let
$Y$ follow the distribution of the total amount of time a tagged
customer waited from arrival until either he abandons or there was
service completion, sampled by an outside inspector at an arbitrary
moment. In fact, $\{A(y)|N\!=\!n\} \thinspace \,{\buildrel d \over
=}\ \thinspace \{A|N\!=\!n,\!Y\!=\!y\}$. From Bayes' law,
\begin{align}
f_{A|N=n,Y=y}(a)=\frac{f_{Y|N=n,A=a}(y)f_{A|N=n}(a)}{\int_a
f_{Y|N=n,A=a}(y)f_{A|N=n}(a)da}.\label{eq:f_S_nx}
\end{align}
Since the presentation in a general case is implicit, we demonstrate
using a special case of $n_{\max}\!=\!3$. However, we can proceed
similarly for any value of $n_{\max}$. We show separately how to
obtain  $f_{A|N=n}(a)$ and $f_{Y|N=n,A=a}(y)$ for both $N=1$ and
$N=2$. $f_{A|N=n}(a)$ can be derived directly from the
\replaced{steady-state densities}{density steady state} of the
Markov \replaced{process}{chain}, specifically for $N=1$,
 $$f_{A|N=1}(a) =\frac{p(0,a)}{\pi_1}$$
and for $N=2$,
$$f_{A|N=2}(a) = \frac{p(1,a)+\int_{w_1=0}^{S_1 \bigwedge a}p(0,a,w_1)dw_1}{\pi_2}$$
We next derive $f_{Y|N=1,A=a}(y)$. Let $Q$ be a random variable that represents the inter-arrival times. Of course, $Q \sim \mathrm{Exp}(\l)$. Due to the PASTA property, an outside inspector sampling times is equivalent to customer arrival times. Therefore, $Q |Q \leq R(0,a) \wedge S_1$ is equivalent to $Y|N=\!\!1,A\!=\!a$.\\
\begin{lem}\label{lem:fYn1}
The conditional density of $Y$ given $A\!\!=\!\!a$, $N\!\!=\!\!1$
\deleted{and $Y<S_1\!\wedge\! R(0,a)$} is
\begin{align*}
    \frac{\l e^{-\l y}\frac{\bar{F}(a+y)}{\overbar{F}(a)}}{\mathbb{P}(Q \leq S_1 \wedge\ R(0,a))}
\end{align*}
and
\begin{align*}
\mathbb{P}(Q \leq S_1 \wedge\ R(0,a))=\int_{r=0}^{S_1}(1-e^{-\l
r})dr+\int_{r=S_1}^{\infty}(1-e^{-\l S_1})dr
\end{align*}
\end{lem}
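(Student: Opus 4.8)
The plan is to work directly from the distributional identity $\{Y\mid N\!=\!1,A\!=\!a\}\,{\buildrel d \over =}\,\{Q\mid Q\leq R(0,a)\wedge S_1\}$, which the PASTA argument preceding the lemma already supplies: the outside inspector's sampling epoch coincides in law with a Poisson arrival epoch, and a fresh arrival that observes service age $a$ and one customer ahead remains in line exactly until the minimum of its own abandonment threshold $S_1$ and the residual service time $R(0,a)$. Since $Q$ is the time to that next event, is $\mathrm{Exp}(\lambda)$, and is independent of the in-progress service (the arrival process being independent of $X$), the whole computation reduces to finding the law of an exponential variable truncated to the event $\{Q\leq R(0,a)\wedge S_1\}$.

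For the density itself I would use the elementary fact that conditioning a variable on an event scales its sub-density on that event by the event's probability. Writing $M=R(0,a)\wedge S_1$ and invoking independence of $Q$ and $R(0,a)$, the sub-density of $Q$ at $y$ restricted to $\{Q\leq M\}$ equals $f_Q(y)\,\mathbb{P}(M\geq y)$. For $0\leq y\leq S_1$ the deterministic threshold is inactive, so $\mathbb{P}(M\geq y)=\mathbb{P}(R(0,a)\geq y)=\overbar{F}(a+y)/\overbar{F}(a)$, and with $f_Q(y)=\lambda e^{-\lambda y}$ the numerator becomes $\lambda e^{-\lambda y}\,\overbar{F}(a+y)/\overbar{F}(a)$; dividing by $\mathbb{P}(Q\leq M)$ yields the stated expression, with the density vanishing for $y>S_1$.

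For the normalizing constant I would condition on the realized residual service time. Given $R(0,a)=r$, the event $\{Q\leq M\}$ is simply $\{Q\leq\min(r,S_1)\}$, of probability $1-e^{-\lambda\min(r,S_1)}$; integrating against the law of $R(0,a)$ and splitting the range at $r=S_1$ produces the contribution $1-e^{-\lambda r}$ on $\{r<S_1\}$ and $1-e^{-\lambda S_1}$ on $\{r\geq S_1\}$, which is precisely the two-piece decomposition claimed. Note that each integrand must be weighted by the residual-service density $f_{R(0,a)}(r)=f(a+r)/\overbar{F}(a)$ rather than by Lebesgue measure, so the two integrals are to be read as expectations over $R(0,a)$ (otherwise the second piece would not even converge).

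The main obstacle is the one genuinely nontrivial step, namely the justification that at the sampling epoch $Q$ may be treated as exponential and independent of the residual service time: this is exactly where PASTA is indispensable, since it lets us replace the stationary inspection epoch by a true arrival epoch at which the memoryless inter-arrival clock restarts independently of the in-progress service. Once that identification is granted, the remainder is a routine truncation-of-an-exponential calculation, and the only bookkeeping care needed is the density weighting flagged above.
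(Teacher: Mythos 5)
Your proposal is correct and takes essentially the same route as the paper's proof: both rest on the PASTA identification $\{Y \mid N\!=\!1, A\!=\!a\}\,{\buildrel d \over =}\,\{Q \mid Q \leq R(0,a)\wedge S_1\}$ followed by conditioning on the realized residual service time, the only difference being that you read off the density directly from the sub-density $f_Q(y)\,\mathbb{P}(R(0,a)\wedge S_1 \geq y)$ with $\mathbb{P}(R(0,a)\geq y)=\overbar{F}(a+y)/\overbar{F}(a)$, whereas the paper writes out the conditional cdf $\mathbb{P}(Q\leq y \mid Q\leq R(0,a)\wedge S_1)$ and differentiates in $y$, yielding the same numerator. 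Your flagged correction is also right: the integrals in the normalizing constant must carry the residual-service weight $f(a+r)/\overbar{F}(a)$ (as they do in the paper's proof of the denominator), and the lemma's displayed formula for $\mathbb{P}(Q\leq S_1\wedge R(0,a))$, which omits this weight and whose second integral would otherwise diverge, is a typo in the statement that you have correctly repaired.
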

\begin{proof}
\begin{align*}
\mathbb{P}(Q \leq y|Q \leq R(0,a)\wedge S_1) = \frac{\mathbb{P}(Q
\leq y,Q \leq R(0,a)\wedge S_1)}{\mathbb{P}(Q \leq R(0,a)\wedge
S_1)}
\end{align*}
We give explicit expressions for both numerator and denominator.\\
The numerator is
\begin{align*}
\mathbb{P}(Q\! \leq\! y,Q\! \leq R(0,a)\wedge\! S_1)=\int_{r=0}^y
\mathbb{P}(Q\!\leq\!
r)\frac{f(a+r)}{\overbar{F}(a)}dr+\int_{r=y}^{\infty}
\mathbb{P}(Q\!\leq\! y)\frac{f(a+r)}{\overbar{F}(a)}dr
\end{align*}
and the denominator is
\begin{align*}
\mathbb{P}(Q\leq R(0,a)\wedge S_1)=\int_{r=0}^{S_1}\mathbb{P}(Q \leq
r)\frac{f(a+r)}{\overbar{F}(a)}dr+\int_{r=S_1}^{\infty}\mathbb{P}(Q
\leq S_1)\frac{f(a+r)}{\overbar{F}(a)}dr
\end{align*}
After taking the derivative, we get the pdf of $Q|Q \leq
R(0,a)\wedge S_1$, and hence of $Y|N=\!\!1,A\!=\!a$ as well.
\end{proof}
\noindent We next derive  $f_{Y|N=2,A=a}(y)$. Let $W_1$ be a random
variable \replaced{taking}{that represents the} value of $w_1$ given
that the state is $(0,a,w_1)$, sampled at an arbitrary moment. The
pdf of $W_1$ is denoted by $f_{W_1}(w_1)$, and it is equivalent to
$\frac{p(0,a,w_1)}{\int_{u=0}^{S_1\bigwedge a}p(0,a,u)du}$. When a
customer arrives to a system given that $N=2$, there are two
possible state structures: \textbf{\romannum{1 }}. $p(1,a)$ and
\textbf{\romannum{2 }}.  $p(0,a,w_1)$. Let $I$ be an indicator that
receives the value of 1 when the state structure is $p(1,a)$, and
receives 0 when it is $p(0,a,w_1)$. From simple probability
considerations,
$\mathbb{P}(I=1)=\frac{p(1,a)}{p(1,a)+\int_{w_1=0}^{S_1\bigwedge a}
p(0,a,w_1)dw_1}$. We indicate that $Q|Q \leq R(0,a)\wedge S_2\wedge
(I(T_1-a)+(1-I)(S_1-W_1)) $ is equivalent to $Y|N\!=\!2,\!A=\!a$.
\begin{lem}\label{lem:fYn2}
The conditional density of $Y$ given $A=a$, $N=2$ and\\ $Y<
(I(T_1-a)+(1-I)(S_1-W_1))\wedge S_2\wedge R(0,a)$ is
\begin{align*}
&\mathbb{P}(I=1)\frac{\l e^{-\l
y}\frac{\bar{F}(a+y)}{\overbar{F}(a)}}{\mathbb{P}(Q \leq S_2 \wedge
R(0,a)\wedge (T_1-a))} +\\& \mathbb{P}(I=0)\int_{w_1=0}^{S_1}\!\!
\frac{\l e^{-\l y}\frac{\bar{F}(a+y)}{\overbar{F}(a)}}{\mathbb{P}(Q
\leq S_2 \wedge R(0,a)\wedge (S_1-w_1) )}p(0,a,w_1)dw_1
\end{align*}
and
\begin{align*}
&\mathbb{P}(Q \leq S_2 \wedge R(0,a)\wedge (T_1-a))=\\&
\int_{r=0}^{(T_1-a)\wedge A_2}\mathbb{P}(Q \leq
r)\frac{f(a+r)}{1-F(a)}dr+\int_{r=S_2\wedge(T_1-a)}^{\infty}
\mathbb{P}(Q \leq(T_1-a)\wedge S_2)\frac{f(a+r)}{\overbar{F}(a)}dr
\end{align*}
and
\begin{align*}
 &\mathbb{P}(Q \leq S_2 \wedge R(0,a)\wedge(S_1-w_1))= \\ & \int_{r=0}^{(S_1\!-\!w_1)\wedge S_2}\!\mathbb{P}(Q\! \leq \! r)\frac{f(a+r)}{\overbar{F}(a)}dr\!\! + \!\!\int_{r\!=\!(S_1\!-\!w_1)\! \wedge \! S_2}^{\infty}\!  \mathbb{P}(Q\!\leq\!(S_1\!-\!w_1)\wedge \!S_2)\frac{f(a+r)}{\overbar{F}(a)} dr
 \end{align*}
\end{lem}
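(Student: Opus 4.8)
The plan is to mirror the argument of Lemma~\ref{lem:fYn1}, now accounting for the fact that, given $N=2$ and $A=a$, the tagged customer may sit behind one of two different kinds of predecessor. First I would record the governing distributional identity. By the PASTA property the instant at which the outside inspector samples the tagged customer is distributed as a Poisson arrival epoch, so the elapsed wait $Y$ is distributed as an interarrival time $Q\sim\mathrm{Exp}(\l)$ truncated to the event that the waiting spell has not yet ended. The spell ends at the first of three competing times: a service completion, occurring at the residual $R(0,a)$; the tagged customer's own abandonment at $S_2$ (he sees two ahead, so his type \textbf{\romannum{2 }} threshold is $S_2$); and the abandonment of the customer directly in front of him. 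This yields
\begin{align*}
\{Y\mid N\!=\!2,A\!=\!a\}\,{\buildrel d \over =}\,\{Q\mid Q\leq R(0,a)\wedge S_2\wedge(I(T_1-a)+(1-I)(S_1-W_1))\},
\end{align*}
which is exactly the equivalence asserted before the lemma.

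Next I would split on the state structure via the indicator $I$ and the law of total probability. When $I=1$ the state is $(1,a)$: the predecessor is a type \textbf{\romannum{1 }} customer who observed the service completion $a$ time units ago and will renege at $T_1$, i.e.\ after $T_1-a$ further units, so the third competing time is $T_1-a$. When $I=0$ the state is $(0,a,w_1)$: the predecessor is a type \textbf{\romannum{2 }} customer who has already waited $w_1$ and reneges at $S_1$, hence after $S_1-w_1$ further units; here I must also average over $W_1$ against its density $f_{W_1}(w_1)=\frac{p(0,a,w_1)}{\int_{u=0}^{S_1\wedge a}p(0,a,u)\,du}$. Weighting the two branches by $\mathbb{P}(I=1)$ and $\mathbb{P}(I=0)$ produces the mixture displayed in the statement.

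The core computation in each branch repeats the derivation of Lemma~\ref{lem:fYn1}. Writing $M$ for the relevant minimum $R(0,a)\wedge S_2\wedge\tau$ (with $\tau=T_1-a$ or $\tau=S_1-w_1$), I would form
\begin{align*}
\mathbb{P}(Q\leq y\mid Q\leq M)=\frac{\mathbb{P}(Q\leq y,\,Q\leq M)}{\mathbb{P}(Q\leq M)},
\end{align*}
and evaluate the numerator by conditioning on $R(0,a)=r$, whose density is $f(a+r)/\overbar{F}(a)$: the $r$-integral splits at $y$, with the deterministic thresholds capping the upper limit, exactly as in the $N=1$ case. Differentiating the numerator in $y$, the two product-rule terms carrying the factor $f(a+y)/\overbar{F}(a)$ cancel and leave $\l e^{-\l y}\overbar{F}(a+y)/\overbar{F}(a)$, while the denominator $\mathbb{P}(Q\leq M)$ is constant in $y$ and is written out as the stated integrals over $r$ truncated at $S_2\wedge\tau$. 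Assembling the two branches gives the claimed density.

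The main obstacle I anticipate is bookkeeping rather than any deep difficulty: correctly identifying, in each state structure, the predecessor's residual reneging time ($T_1-a$ versus $S_1-w_1$) and propagating it through the triple minimum so that the truncation limits in the denominator integrals are right, together with the PASTA-based justification that sampling by an outside inspector coincides in law with a fresh Poisson arrival. Once the minimum is handled case by case, the differentiation step is identical to Lemma~\ref{lem:fYn1}, and the telescoping cancellation delivers the common factor $\l e^{-\l y}\overbar{F}(a+y)/\overbar{F}(a)$ in both terms of the mixture.
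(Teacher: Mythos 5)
Your proposal follows the paper's own proof in Appendix~\ref{sec:appendixa} essentially step for step: the same PASTA-based identification of $Y$ with the truncated interarrival time $Q$, the same split on the indicator $I$ (averaging over $W_1$ against $p(0,a,w_1)$), and the same numerator/denominator ratio evaluated by conditioning on the residual service time $R(0,a)=r$, followed by differentiation in $y$. If anything, your write-up is slightly more explicit than the paper's (which modulo its $A_i$/$S_i$ notational drift just says ``after taking the derivative''), since you spell out the product-rule cancellation yielding the common factor $\l e^{-\l y}\overbar{F}(a+y)/\overbar{F}(a)$; there is no gap.
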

The proof is given in Appendix A.
\subsection{Relations between the thresholds} \label{section:relations is the results}
In this section we refer to some trivial and nontrivial results
concerning the values of the sequences
$\{S_1,S_2,...,S_{n_{\max}\!-\!1}\}$ and
$\{T_1,T_2,...,T_{n_{\max}\!-2\!}\}$. Specifically, we describe the
dependencies and boundaries between them. Intuitively, the more
customers are in front of you, the less you would be willing to
wait.
\begin{lem}
If $T_i<\infty$,  $T_n> T_{n+1}$ for $1 \leq n \leq n_{\max}\!-\!3$.
\end{lem}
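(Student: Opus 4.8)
The plan is to exploit the fact that, by Lemma~\ref{lem:Tn}, each finite threshold $T_n$ is the unique zero of the function $G_n(t)=V-C(m_X(t)+(n-1)\bar{x})$, and that consecutive such functions differ only by a constant vertical shift,
$$G_{n+1}(t)=G_n(t)-C\bar{x},$$
which is strictly negative and independent of $t$. The whole argument therefore reduces to tracking how a fixed decreasing curve, shifted rigidly downward, crosses the horizontal axis earlier. Note first that the range $1\le n\le n_{\max}-3$ is exactly what is needed so that both $T_n$ and $T_{n+1}$ lie among the thresholds defined in Lemma~\ref{lem:Tn} (indices up to $n_{\max}-2$).

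First I would record the two structural facts. By the IMRL assumption $m_X(t)$ is nondecreasing, so each $G_n$ is nonincreasing in $t$; together with the finiteness hypothesis (the condition $\lim_{t\to\infty} m_X(t) > V/C-(n-1)\bar{x}$ of Lemma~\ref{lem:Tn}) this makes $G_n$ positive on $[0,T_n)$ and nonpositive thereafter, with $G_n(T_n)=0$. I would also observe that finiteness propagates upward in $n$: since $V/C-(n-1)\bar{x}$ is decreasing in $n$, the condition guaranteeing $T_n<\infty$ automatically guarantees $T_{n+1}<\infty$, so the hypothesis $T_i<\infty$ indeed covers $T_{n+1}$ whenever it covers $T_n$.

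Next I would evaluate the shifted function at the zero of the unshifted one:
$$G_{n+1}(T_n)=G_n(T_n)-C\bar{x}=-C\bar{x}<0.$$
Since $G_{n+1}$ is nonincreasing and satisfies $G_{n+1}(T_{n+1})=0>-C\bar{x}=G_{n+1}(T_n)$, monotonicity forces $T_{n+1}<T_n$: if instead $T_{n+1}\ge T_n$, then $G_{n+1}(T_{n+1})\le G_{n+1}(T_n)<0$, contradicting $G_{n+1}(T_{n+1})=0$. This yields $T_n>T_{n+1}$ for every $n$ in the stated range.

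There is no deep obstacle here; the result follows directly from the rigid downward shift by the fixed amount $C\bar{x}$. The only points requiring care are (i) securing the strict inequality rather than $\ge$, which the strictly positive gap $C\bar{x}$ delivers automatically, independently of whether $m_X$ is strictly or only weakly increasing, and (ii) confirming that the finiteness hypothesis is consistent across the two consecutive indices, which is handled by the monotonicity of $V/C-(n-1)\bar{x}$ in $n$ noted above. This matches the intuition preceding the lemma: one extra customer ahead adds a fixed expected cost $C\bar{x}$, which a waiting customer ``reaches'' sooner, lowering his break-even time.
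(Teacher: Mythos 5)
Your proof is correct and takes essentially the same route as the paper's: both arguments rest on the monotonicity of $m_X(t)$ under IMRL (hence of $G_n$) combined with the fact that $n$ enters $G_n$ only through the constant term $(n-1)\bar{x}$, so increasing $n$ rigidly lowers the curve and moves its zero earlier. Your version is simply a more careful rendering of the paper's one-line argument, with the added (and welcome) explicit handling of strictness via the gap $C\bar{x}$ and of the propagation of the finiteness condition from $n$ to $n+1$.
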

\begin{proof}
Recall that $t$ in equation~(\ref{eq:Tn}) refers to the time one
waited in the queue since service completion. Due to the fact that
$\mathbb{E}[X-t|X>t]$ is increasing with $t$, it follows
straightforwardly that the larger the value of $n$ the lower the
value of $t$ that solves the equation, and hence the lower the value
of $T_n$.
\end{proof}
\begin{lem}
If $S_n<\infty$,  $S_n > S_{n+1}$ for $1 \leq n \leq
n_{\max}\!-\!2$.\label{lem:Sn>Sn+1}
\end{lem}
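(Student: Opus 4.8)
The plan is to mirror the proof of Lemma~\ref{lem:Tn}, using that the root of a strictly decreasing function moves to the left when the function is lowered pointwise. By the remark following Proposition~\ref{prop:u(t)1}, the IMRL assumption makes both $\hat{U}_n(t)$ and $\hat{U}_{n+1}(t)$ continuous and strictly decreasing in $t$, so each of $S_n,S_{n+1}$ is the unique root of the corresponding equation~(\ref{eq:Sn}) whenever it is finite; moreover $\hat{U}_{n+1}(0)\ge 0$ because $n+1\le n_{\max}-1$, so $\hat{U}_{n+1}$ does reach zero. It therefore suffices to prove the pointwise comparison $\hat{U}_{n+1}(t)<\hat{U}_n(t)$ for all $t\ge 0$: evaluating at $t=S_n$ gives $\hat{U}_{n+1}(S_n)<\hat{U}_n(S_n)=0$, and since $\hat{U}_{n+1}$ is strictly decreasing with root $S_{n+1}$, a negative value at $S_n$ forces $S_{n+1}<S_n$ (and in particular $S_{n+1}<\infty$).

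Write $\hat{U}_n(t)=V-C\big(\rho_n(t)+(n-1)\bar{x}\big)$ on the region where it is nonnegative, where, using $\mathbb{E}[R(t,a)]=m_X(a+t)$, the expected residual of the service in progress is $\rho_n(t):=\int_a m_X(a+t)\,f_{A(t)\mid N=n}(a)\,da$ (cf.~(\ref{eq:baru(t)r})). The pointwise inequality $\hat{U}_{n+1}(t)<\hat{U}_n(t)$ is then equivalent to $\rho_{n+1}(t)+\bar{x}>\rho_n(t)$, i.e.\ to the expected remaining sojourn time of a type~\textbf{\romannum{2 }} customer being strictly increasing in $n$. The step from $n$ to $n+1$ adds a full service of positive mean $\bar{x}$ through the term $(n-1)\bar{x}$; what remains is to control the residual term $\rho_n(t)$, and this is where the argument departs from Lemma~\ref{lem:Tn}, where the residual $m_X(t)$ did not depend on $n$ because type~\textbf{\romannum{1 }} customers know the exact age.

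The main obstacle is precisely this residual term, since the conditional age law $f_{A(t)\mid N=n}$ is only produced later, via the Markov chain of Section~\ref{sect:markovchain}. Because $\rho_n(t)$ is the expectation of the increasing function $a\mapsto m_X(a+t)$ under the law of $A(t)\mid N=n$, it would be enough to show that $A(t)\mid N=n$ is stochastically increasing in $n$; then $\rho_{n+1}(t)\ge\rho_n(t)$, whence $\rho_{n+1}(t)+\bar{x}>\rho_n(t)$ and the lemma follows by the first paragraph. The heuristic for the stochastic order is that in the $M/G/1$ dynamics a larger number in the system is positively associated with a longer service in progress, since a longer service admits more arrivals during it, so conditioning on more customers biases the current age upward. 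Turning this heuristic into a rigorous monotonicity statement from the balance equations~(\ref{eq:steady1})--(\ref{eq:steady3}), in the presence of the abandonment constraints $a<T_k$ and $w_i<S_i$, is the delicate step; as a sanity check I would first verify the sign of the effect on the explicit densities $f_{A\mid N=1}$ and $f_{A\mid N=2}$ of the $n_{\max}=3$ case worked out in Section~\ref{section:fa}, before attempting the general ordering.
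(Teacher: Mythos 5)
Your first two paragraphs are a sound and even sharper framing than the paper's: with IMRL, the underlying linear expressions $V-C(\rho_n(t)+(n-1)\bar{x})$ are decreasing in $t$, $\hat{U}_{n+1}(0)>0$ holds because $n+1\leq n_{\max}-1$ (Proposition~\ref{prop:nmax}), and so a strict pointwise comparison at the single point $t=S_n$ would indeed force $S_{n+1}<S_n$. But the proof then stops exactly where the mathematical content of the lemma lies. The stochastic ordering of $A(t)\mid N=n$ in $n$ is offered only as a heuristic (``a longer service admits more arrivals'') together with a plan to sanity-check it on the $n_{\max}=3$ densities; it is never established. This is not a routine verification: the law of $A(t)\mid N=n$ is determined by the equilibrium dynamics themselves, with the truncation constraints $a<T_k$ and $w_i<S_i$ and the cascading abandonments, and is only constructed through the Markov process of Sections~\ref{sect:markovchain} and~\ref{section:fa}. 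Nor can the slack $\bar{x}$ rescue you if the ordering fails: since the paper explicitly allows unbounded MRL (e.g.\ Pareto), $\rho_n(t)=\int_a m_X(a+t)f_{A(t)\mid N=n}(a)\,da$ admits no a priori upper bound, so without some comparison between the two age laws at $t=S_n$ the needed inequality $\rho_{n+1}(S_n)+\bar{x}>\rho_n(S_n)$ cannot be extracted. So the proposal has a genuine gap at its crux, and moreover the ordering you conjecture is strictly stronger than what the lemma requires.

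It is worth noting that the paper's own proof takes a different route that bypasses the age distribution entirely: it conditions on the last event prior to the arrival of the customer who observes $n+1$. If that event was an arrival, the customer who observed $n$ is the one directly ahead and was plainly better situated; if it was an abandonment, the new arrival stands where the best response of the customer just ahead was to leave, hence is worse off than an $n$-observer; and in the remaining case, where the arriving customer is the first arrival of the current service period (so the age equals his interarrival time), the paper argues this favorable ``fresh age'' event is less likely under $n+1$, because the extra customer supplies one additional way for the conditioning event to fail, namely his own abandonment. This sample-path case analysis is itself informal, but its key idea--decomposing by the last event rather than comparing the conditional age densities--is what lets the paper avoid the monotonicity statement you would have to prove from the balance equations~(\ref{eq:steady1})--(\ref{eq:steady3}). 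To salvage your route you would need to actually prove the stochastic ordering (which the paper never does); otherwise the honest repair is to adopt the paper's last-event decomposition in place of your third paragraph.
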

\begin{proof}
Suppose an individual observed $n+1$ customers upon arrival. The
last event prior to his arrival could be either an arrival, an
abandonment or a service completion. If it \replaced{were}{was} an
arrival, then clearly the one who observed $n$ customers was in a
better situation when he arrived. If it was an abandonment, then
this individual (if he knew) should not join at all, and hence he is
in a worse situation than the one who observed $n$ upon arrival. The
last case is that where the arriving customer is the first to arrive
during the current service period. In this case, the age of the
service time is his inter-arrival time. Yet, we claim that the
greater the queue length, the smaller the probability of such an
event. This is because, after service completion when there are
either $n$ or $n+1$ customers in the system everything is the same
except one thing. In the $n+1$ system, the last customer can also
reach his abandonment time (which didn't exist in the $n$ customers
system). Thus, the probability that an arrival will occur prior to
service completion or abandonment is indeed lower in the $n+1$
customers system.
\end{proof}
\noindent We also observe that all $\{T_1,T_2,...,T_{n_{\max}-2}\}$
are obtained independently of everything else. This is a direct
result from equation~(\ref{eq:Tn}), where the value of $T_n$ is
determined by the values of the model parameter and $n$. Moreover,
we claim that $S_i$, for $\forall i\in \{1,2,...,n_{\max}\!-\!2\}$,
\deleted{is} depend\replaced{s}{ent} only on
$\{S_1,S_2,...,S_{i-1}\}$ and $\{T_1,T_2,...,T_i\}$, and for
$i=n_{\max}-1$, is dependent only on $\{S_1,...,S_{i-1}\}$ and
$\{T_1,T_2,...,T_{n_{\max}-2}\}$. This means, that one who observed
$i$ customers upon arrival, is not effected in any way, by actions
taken by customers who were in the $i+2^{th}$ position or worse in
the system (that is, have $i+1$ or more customers in from of them).
This result is rather surprising since at first thought it seems
that the entire history of a busy period would effect the value of
$S_i$. But the values of $S_i$ are not affected by
$\{S_{i+1},...,S_{n_{\max}-1}\}$ and
$\{T_{i+1},...T_{n_{\max}-2}\}$. The intuition behind this is as
follows. First, information regarding actions that took place in previous service period are irrelevant for the an arriving customer, because they have no impact (given the information he observes) on his waiting time.\ Now, let $C_i$ and $C_{i+1}$ be two customers who found $i$ and $i+1$ customers upon arrival, respectively, before an arrival a new customer, and all three arrived at the same service period.\ Also, assume that the new customer found $i$ customers in the system. Thus, $C_i$ and $C_{i+1}$ abandoned before the arrival of the new customer.\ The abandonment of $C_{i+1}$  could be triggered by either his loss of patience or by the loss of patience of $C_i$ .recall that if one abandons those after him abandon as well.\  The information that that the new customer faces is the same in both scenarios, because given that fact the $C_i$ abandoned, the threshold that $C_{i+1}$ has no influence on the state that the new customer faces.\

\begin{thm}\label{thm:sequence}
The Nash equilibrium profile is defined by two finite sequences of
thresholds, $\{T_1,T_2,...T_{n_{\max}-2}\}$ and
$\{S_1,S_2,...S_{n_{\max}-1}\}$, each sequence for each customer type. Within each sequence, from an individual point of
view, each threshold (e.g., the time he waits in line before reneging) is determined by the number of customers in
front of him and of course the model input.
\end{thm}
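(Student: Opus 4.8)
The plan is to verify the Nash-equilibrium condition directly: fix the candidate profile given by the two threshold sequences, and show that against this profile each customer's best response is exactly the threshold prescribed for his type and position. The whole argument rests on one structural fact already in hand — that expected utility is monotone decreasing in elapsed time — which collapses every best response to a pure threshold rule, so that ``being a Nash equilibrium'' reduces to a self-consistency (fixed-point) statement about the thresholds.

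First I would reduce best responses to thresholds. By Proposition~\ref{prop:u(t)} the expected utility of a type~\textbf{\romannum{1 }} customer with $n$ ahead is $U_n(t)=(G_n(t))^+$, and by Proposition~\ref{prop:u(t)1} that of a type~\textbf{\romannum{2 }} customer is $\hat U_n(t)$; both are monotone decreasing in $t$ under the IMRL assumption (as recorded in the remarks). Monotonicity means the optimal rule is ``stay while utility is positive, abandon at the first instant it vanishes,'' i.e.\ a single threshold. Lemma~\ref{lem:Tn} identifies that threshold with $T_n$, the unique root of~(\ref{eq:Tn}) (or $+\infty$ when the MRL-limit condition fails), and Lemma~\ref{lem:Sn} identifies the type~\textbf{\romannum{2 }} threshold with $S_n$ via~(\ref{eq:Sn}). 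The remaining prescribed behaviour — abandon as soon as the customer in front abandons — is subsumed, since his earlier abandonment places the tagged customer in precisely the state in which abandoning was already optimal (as argued in the strategy-profile subsection).

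Next I would close the fixed-point loop and argue finiteness. The subtlety is that $\hat U_n$, and hence $S_n$, was computed \emph{under the assumption} that all others already follow the profile: it depends on the residual $X_n(t)$ and the age law $f_{A(t)|N=n}$, which are themselves induced by the thresholds. To break this apparent circularity I would invoke the triangular dependency established just before the theorem: each $T_n$ is fixed directly by~(\ref{eq:Tn}) from $n$ and the model input alone, while each $S_i$ depends only on $\{S_1,\dots,S_{i-1}\}$ and $\{T_1,\dots,T_i\}$. This triangularity lets me construct the profile by forward induction on the index ($T_1,S_1,T_2,S_2,\dots$), at each stage solving a threshold equation whose inputs are already determined; the resulting profile is self-consistent, so each customer's best response against it is exactly his prescribed threshold, which is the definition of a Nash equilibrium. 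Finiteness of both sequences follows from Proposition~\ref{prop:nmax}: $n_{\max}<\infty$ under the stated MRL assumptions, and the sequences run only up to $n_{\max}-2$ and $n_{\max}-1$. The final clause — that each threshold is determined by the number ahead and the model input — then follows by the same induction: $T_n$ immediately, and $S_n$ because all of its dependencies unwind into model primitives.

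The hard part will be the self-consistency step, i.e.\ justifying that the age distribution used to compute $\hat U_n$ is genuinely the stationary one induced by the profile and that the dependency is truly triangular (no $S_i$ leaning on a higher-indexed threshold). Everything else is bookkeeping over the already-proved lemmas; this is where the ``surprising'' independence of $S_i$ from $\{S_{i+1},\dots\}$ and $\{T_{i+1},\dots\}$ does the real work, by guaranteeing that the information an arriving customer observes is unaffected by the behaviour of customers two or more positions behind him, so the coupled system decouples into a solvable triangular recursion.
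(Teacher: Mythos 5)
Your proposal is correct and rests on the same building blocks as the paper's proof---Propositions~\ref{prop:u(t)} and~\ref{prop:u(t)1} for the two customer types, Lemmas~\ref{lem:Tn} and~\ref{lem:Sn} for the threshold form of best responses, and Proposition~\ref{prop:nmax} for finiteness---but your route is genuinely more complete than what the paper writes down. The paper's own proof is three sentences of assembly: two sequences because the two types have different utility functions, sequence lengths from Proposition~\ref{prop:nmax} together with the Markov-chain state space, and the threshold structure ``already proved'' in the two lemmas. It never addresses the point you correctly isolate as the crux: that $\hat{U}_n$, and hence $S_n$, is computed against the stationary age law $f_{A(t)|N=n}$ induced by the profile itself, so that ``Nash equilibrium'' is a fixed-point assertion rather than a family of independent optimizations. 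The paper handles this only implicitly, via the informal paragraph preceding the theorem (the triangular dependence of $S_i$ on $\{S_1,\dots,S_{i-1}\}$ and $\{T_1,\dots,T_i\}$, itself argued heuristically rather than proved) and via the computational scheme of Remark~\ref{remark:numerical1}. Your forward-induction closure of the loop, together with the observation that the ``abandon when the one in front abandons'' rule is subsumed by the threshold logic, is a real strengthening of the published argument.

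One precision you should build in if you execute the plan: the recursion is triangular \emph{across} indices but not explicit \emph{within} an index. The steady-state densities feeding $f_{A(t)|N=n}$ depend on $S_n$ itself---e.g., $p(0,a)$ for $a>S_1$ involves $S_1$ through $g(k,n,m,a)$---so at stage $n$ equation~(\ref{eq:Sn}) is a scalar fixed-point problem in $S_n$, not ``a threshold equation whose inputs are already determined.'' This is precisely why Remark~\ref{remark:numerical1} searches for $S_1$ by guess-and-adjust rather than solving directly. Monotonicity of $\hat{U}_n$ in $t$ makes each such scalar fixed point plausible and is what the paper's bisection tacitly assumes, but existence (and any uniqueness claim) at each stage still needs an argument; since you flagged self-consistency as the hard part, this is a refinement of your plan rather than a gap in it.
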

\begin{proof}
Customers' strategies are determined by their expected utility
functions. Based on Propositions~\ref{prop:u(t)}
and~\ref{prop:u(t)1}, we claim that there are two sequences because
the different customer types have different expected utility
functions, and therefore their strategies are different as well. The
sequence lengths are a direct result of Proposition~\ref{prop:nmax}
and the definition of the steady states of the Markov chain.
Finally, the fact that the Nash equilibrium profile within each
sequence is defined by thresholds was already proved in
Lemma~\ref{eq:Tn} and Lemma~\ref{eq:Sn}.
\end{proof}

\begin{remark}
After obtaining the steady state density probabilities (see
remark~\ref{remark:numerical}), we now use a different scheme to
compute the threshold values. First, the values of $T_1,
T_2,...,T_{n_{max}-2}$ are computed independently directly from
Equation~(\ref{eq:Tn}). Then, the values of $S_1,
S_2,...,S_{n_{max}-1}$ are computed recursively starting from $S_1$.
Thus, we guess a value of $S_1$ and while using
Equation~(\ref{eq:f_S_nx}) and employing the scheme in
Lemma~\ref{lem:fYn1} while using the steady state probabilities that
were obtained as described in Remark~\ref{remark:numerical}. If the
expected utility which is computed according to
Equation~(\ref{eq:Sn}) is negative we guess a lower value of $S_1$
and vice versa. After obtaining $S_1$ this goes on until we obtain
all values of $S_n$ for $n\in\{1,2,...,n_{max}-1\}$\footnote{We note
that although we provided in this paper the density probability of
$f_A|N=n$ for $n=1$ and $n=2$ only, by following the same line of
thought it can obtained for every $n\leq n_{max}-1$ only with a much
greater complexity. Even for $n=3$ (that is $n_{max}=4$) it gets
extremely difficult.}. The computations are very complex, which is
due to two main reasons. The first is that we don't have closed form
equations. As a result, there are numerous iterations for each value
that is being computed. The second is that the density probabilities
functions are very complicated and are calculated differently along
the support as a result of the function $g(k,n,m,a)$. Finally we
note that the both the procedure that was described in
Remark~\ref{remark:numerical} and in current one were computed by
using Wolfram-Mathematica software and we used a tolerance parameter
$\epsilon =10^{-3}$. \label{remark:numerical1}
\end{remark}

\subsection{Numerical result}
We present an example where the service distribution is
hyperexponential, and the model parameters are $V=4.85$, $c=1$,
$\mu_1=1$, $\mu_2=0.2$, $p=0.95$ and $\l=3$. The (symmetric) Nash
equilibrium is $n_{\max}=3$, $T_1=7.73$, $S_1=7.202$ and $S_2=3.13$.
In this example, the value of $S_2$ is significantly lower than
$S_1$. This is due to two reasons. The first is trivial, where one
needs to wait for an extra customer. The second is that the age
distribution is stochastically larger, which is a direct result from
Lemma~\ref{lem:Sn>Sn+1}.

\section{Conclusions} \label{section:discuusion}

In this study we show how to obtain the Nash equilibrium in an
observable $M/G/1$ queue with abandonments. We focus on service time
distributions which \replaced{have an}{follow} IMRL. The Nash
equilibrium is defined by two sequences of thresholds. The values of
the sequence $\{T_1,T_2,...T_{n_{\max}-2}\}$ represents the
abandonment thresholds for customers that observed service
completion, and they are obtained by solving a linear equation. The
values of the sequence $\{S_1,S_2,...,S_{n_{\max}-1}\}$, represents
the abandonment thresholds for customers that didn't observe service
completion, and obtaining them is much more difficult. They can be
computed recursively \replaced{based on the definition of the}{while
using a} Markov \replaced{process}{chain}. The reason we are able to
obtain them recursively is because customers' decisions are not
effected by \replaced{future arrivals}{those who are in a worse
positions in the queue}, (i.e. they are transparent to them). In
other words, from the point of view of a customer who is in the
$n^{\mathrm{th}}$ position in the queue, the maximum length of the
queue is $n$. Also, a numerical example is given in which both
sequences are computed. 
\\Finally, we discuss three limitations of our model: \\
\begin{enumerate}
\item We assumed that customers are homogenous with respect to their service value and waiting cost. Of course, considering heterogenous customers is more realistic. Perhaps, a future study may extend this results, but, it is vital that a simpler solution will be obtained first to build the foundation of such study.
\item We assumed that the time one already waited is considered to be a sunk cost and only future waiting time is considered. In real life, this may not always be the case. Relaxing this assumption, may have a huge impact on our solution due to the following reason: If one abandoned in front of me, I no longer necessarily abandon as well. This is because, I will not be in the exact situation as he was. This may alter the entire strategy profile.
    \item We assumed that the service distribution possess the IMRL property. The case where the service is with IFR was already argued for. We next discuss the case in which the hazard function is neither with IFR or DFR. If so, we believe that a general solution cannot be obtained due to the dependency on the service distribution.
\end{enumerate}
\bibliographystyle{plain}

\begin{appendix}
\section{Appendix A}\label{sec:appendixa}
\begin{proof}
\begin{align*}
&P(Q \leq y|Q \leq R(0,a)\wedge A_2\wedge(I(T_1-a)+(1-I)(A_1-W_1) )) \\ & = \frac{P(Q \leq y,Q \leq R(0,a)\wedge A_2\wedge(I(T_1-a)+(1-I)(A_1-W_1) ))}{P(Q \leq R(0,a)\wedge A_2\wedge(I(T_1-a)+(1-I)(A_1-W_1)))}
\end{align*}

We give explicit expressions for both the numerator and the denominator.\\
The numerator is
\begin{align*}
&P(Q \leq y,Q \leq R(0,a)\wedge A_2\wedge(I(T_1-a)+(1-I)(A_1-W_1) ))\\
&=\int_{w_1=0}^{A_1}  P(I=1)P(Q \leq y,Q \leq R(0,a)\wedge A_2\wedge(T_1-a))\\
&+P(I=0)\int_{a=0}^{A_2\wedge (A_1-w_1) }P(Q \leq y,Q \leq R(0,a)\wedge A_2\wedge(A_1-w_1) ) p(0,a,w_1)dw_1
\end{align*}
and the denominator is
\begin{align*}
&P(Q \leq R(0,a)\wedge A_2\wedge(I(T_1-a)+(1-I)(A_1-W_1)))\\
&=\int_{w_1=0}^{A_1} P(I=1)P(Q \leq R(0,a)\wedge A_2\wedge(T_1-a))\\
&+P(I=0)P(Q \leq R(0,a)\wedge A_2\wedge(A_1-w_1) ) p(0,a,w_1)dw_1
\end{align*}
After taking the derivative, we obtain the p.d.f.\ of $Q|Q \leq R(0,a)\wedge A_2\wedge(I(T_1-a)+(1-I)(A_1-W_1))$, and hence of $Y|N=\!\!2,A\!=\!a$ as well.

\end{proof}

\end{appendix}

\end{document}